\newtheorem{theorem}{Theorem}[section]
\newtheorem{corollary}[theorem]{Corollary}
\theoremstyle{definition}
\newtheorem{example}[theorem]{Example}
\newtheorem{conjecture}[theorem]{Conjecture}
\theoremstyle{remark}
\newtheorem{remark}[theorem]{Remark}
\numberwithin{equation}{section}
\def\mc{\mathcal}
\begin{document}
\setcounter{page}{1}

\title[Fuglede-Putnam type commutativity theorems for  $ EP $ operators]{Fuglede-Putnam type commutativity theorems for  $ EP $ operators}

\author[P. Sam Johnson, Vinoth  A., K. Kamaraj]{P. Sam Johnson$^1$, Vinoth  A.$^{1,2}$$^{*}$ and  K. Kamaraj$^3$}

\address{$^{1}$ Department of Mathematical and Computational Sciences, 
	National Institute of Technology Karnataka, Surathkal, Mangaluru 575 025, India.}
\email{sam@nitk.edu.in}

\address{$^{2}$ Department of Mathematics, St. Xavier's College, Palayamkottai 627 002, India.}
\email{vinoth.antony1729@gmail.com}

\address{$^{3}$ 	Department of Mathematics, University  College of Engineering Arni, Anna University, Arni 632 326, India.}
\email{krajkj@yahoo.com}

%\dedicatory{This paper is dedicated to Professor ABCD}

\subjclass[2010]{47A05, 15A09, 47B99.}

\keywords{Fuglede-Putnam theorem, Moore-Penrose inverse, $EP$ operator.
\newline \indent $^{*}$ Corresponding author}

% \date{Received: xxxxxx; Revised: yyyyyy; Accepted: zzzzzz.
% }

\begin{abstract}
Fuglede-Putnam theorem is not true in general for $ EP $ operators on Hilbert spaces. We prove that under some conditions the theorem holds good. If the adjoint operation is replaced by Moore-Penrose inverse in the theorem, we get Fuglede-Putnam type theorem for $ EP $  operators -- however proofs are totally different.  Finally, interesting results on $ EP $ operators have been proved using several versions of Fuglede-Putnam type theorems for $ EP $ operators on Hilbert spaces.
\end{abstract} \maketitle

\section{Introduction}
A square matrix $A$ over the complex field $\mathbb{C}$ is said to be an $EP$ matrix if ranges of $A$ and $A^*$ are equal. Although the  $EP$ matrix was defined by Schwerdtfeger \cite{Hans1950} in 1950,  it could not get any greater attention until Pearl \cite{Pearl1966(2)} characterized it through Moore-Penrose inverse in 1966. The normed space of all bounded linear operators from a Hilbert space $\mc H $ to a Hilbert space $\mc K $ is denoted by $ \mc B(\mc H, \mc K ) $.  We write $ \mc B(\mc H, \mc H)=\mc B(\mc H) $.  If $ T\in \mc B(\mc H, \mc K) $, we denote the kernel of $ T $ by $ \mc N(T) $ and the range of $ T $ by $ \mc R(T) $.  The operator $ T $ is said to be invertible if its inverse exists and is bounded.  Given $ T\in \mc B(\mc H, \mc K) $, $ S\in \mc B(\mc K, \mc H) $ is the adjoint operator on $\mc  H $ if $ \langle Tx, y\rangle = \langle x, Sy\rangle $ for all $ x\in \mc H $ and $ y\in \mc K $; in this case the operator $ S $ is denoted by $ T^*$. If $ T\in \mc B(\mc H, \mc K) $ with a closed range, then $T^\dag$ is the unique linear operator in $\mathcal \mc B(\mc K, \mc H)$ satisfying $$TT^\dag T=T, \ T^\dag TT^\dag =T^\dag, \ TT^\dag = (TT^\dag)^* \ \text{ and  } \ T^\dag T = (T^\dag T)^*.$$
%The operator $T^\dag$ is called the Moore-Penrose inverse of $T$.   It is well-known that an operator $T$ has a closed range‎ ‎if and only if its Moore-Penrose inverse $T^\dag$ exists.

The operator $ T^\dag $ is called the Moore-Penrose inverse of $ T $. It is well-known that an operator $ T $ has a closed range if and only if its Moore-Penrose inverse $ T^\dag $ exists. The class $ \mc B_c(\mc H)$ denotes the set of all operators in $ \mc B(\mc H) $ having closed ranges.  For any nonempty set $ \mc M $ in $ \mc H $, $ \mc M^\perp $ denotes the orthogonal complement of $ \mc M $.  Note that if $ T\in \mc B_c(\mc H) $, then $ T^*\in \mc B_c(\mc H)$, $\mc N(T)^\perp=\mc R(T^*)$, $ \mc N(T^*)^\perp = \mc R(T) $ and $ \mc R(T)=\mc R(TT^*)$.  An operator $T\in \mc B_c(\mc H)$ is said  to be an $EP$ operator if $\mc R(T)=\mc R(T^*)$. $EP$ matrices and operators have been studied by many authors \cite{Pearl1966,Meyer1970,Campbell1975,Brock1990,Hartwig1997,Koliha2007,Dijana2017,Xu2019}. It is well-known that if $T$
is normal with a closed range, or an invertible operator, then $T$ is $EP$. The converse is not true even in a finite dimensional
space.

The Fuglede-Putnam theorem (first proved by B. Fuglede \cite{Fuglede1950} and then by C. R. Putnam \cite{Putnam1951} in a more general version) plays a major role in the theory of bounded (and unbounded) operators. Many authors have worked on it since the papers of Fuglede and Putnam got published \cite{Duggal2001,Gong1987,Gupta1988,Mecheri2004}. 
There are various generalizations of the Fuglede-Putnam theorem to non-normal operators, for instance, 
hyponormal, subnormal, etc.  This paper is devoted to the study of Fuglede-Putnam type theorems for $ EP $ operators.

In section 2, we give some known characterizations for $ EP $ operators and we give a procedure to construct an $ EP $ matrix $ T $ (preferably non-normal) for the given subspace $ \mc W $ of the unitary space $\mathbb{C}^n $ such that $ \mc R(T)=\mc W $. This construction has been used in the paper to construct suitable examples of $ EP $ matrices. We show in section 3  that the Fuglede theorem \cite{Fuglede1950} is not true in general for $ EP $ operators (Example \ref{ex_1}) and we prove that the commutativity relation in Fuglede-Putnam theorem is true for $ EP $ operators if the adjoint operation is replaced by Moore-Penrose inverse.  Moreover, several versions of Fuglede-Putnam type theorems are given for $ EP $ operators.  In the last section, we prove some interesting results using Fuglede-Putnam type theorems for $EP$ operators on Hilbert spaces.

\section{Preliminaries}
Let $\mc H $ be a complex Hilbert space.  An operator on $\mc H $ means a linear operator from $\mc H $ into itself.  Given an $ EP $ operator $ T $ on $\mc H $, we get a closed subspace $\mc R(T) $ which is the same as $ \mc R(T^*) $.  On the other hand, one may ask whether every closed subspace $\mc M $ of $\mc H $ is the range of some $ EP $ operator (not necessarily normal) on $\mc H $.  The answer is in the affirmative in a finite dimensional Hilbert space $\mc H$.    We give a procedure to construct such $ EP $ matrices and this construction has been used in the sequel to provide suitable examples of $ EP $ matrices.
We use the letters $ S, T $ for $ EP $ operators ; $ M, N $ for normal operators and $ A, B $ for bounded operators.

We start with some known characterizations of $ EP $ operators.
\begin{theorem}\cite{Pearl1966(2),Brock1990}
	Let $ T\in \mc B_c(\mc H) $.  Then the following are equivalent :
	\begin{enumerate}
		\item $ T $ is $ EP $ ;
		\item $ TT^\dag=T^\dag T $ ;
		\item $\mc N(T)^\bot=\mc R(T)$ ;
		\item $\mc N(T)=\mc N(T^*)$ ;
		\item $T^*=PT$, where $P$ is some bijective bounded operator on $\mc H$.
		
	\end{enumerate}
\end{theorem}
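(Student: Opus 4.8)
The plan is to treat condition $(1)$ as a hub and prove its equivalence with each of the remaining conditions, drawing throughout on the facts recorded in the preliminaries: for $T \in \mc B_c(\mc H)$ one has $\mc N(T)^\perp = \mc R(T^*)$ and $\mc N(T^*)^\perp = \mc R(T)$, with all four subspaces closed. The equivalences $(1) \Leftrightarrow (3)$ and $(1) \Leftrightarrow (4)$ are then almost immediate. Since $\mc N(T)^\perp = \mc R(T^*)$, the defining condition $\mc R(T) = \mc R(T^*)$ is literally the same as $\mc R(T) = \mc N(T)^\perp$, giving $(1) \Leftrightarrow (3)$. For $(1) \Leftrightarrow (4)$ I would pass to orthogonal complements: $\mc N(T) = \mc N(T^*)$ holds iff $\mc N(T)^\perp = \mc N(T^*)^\perp$, that is, iff $\mc R(T^*) = \mc R(T)$, because the complements of closed subspaces determine them.

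For $(1) \Leftrightarrow (2)$ the key observation is that the Moore-Penrose identities force $TT^\dag$ and $T^\dag T$ to be orthogonal projections. First I would note that $TT^\dag$ is self-adjoint (a defining identity) and idempotent, since $TT^\dag TT^\dag = (TT^\dag T)T^\dag = TT^\dag$; together these make it the orthogonal projection onto its range, and $TT^\dag T = T$ identifies that range as $\mc R(T)$. A symmetric argument shows $T^\dag T$ is the orthogonal projection with kernel $\mc N(T^\dag T) = \mc N(T)$, hence onto $\mc N(T)^\perp = \mc R(T^*)$. Since two orthogonal projections coincide exactly when their ranges agree, $TT^\dag = T^\dag T$ is equivalent to $\mc R(T) = \mc R(T^*)$, which is $(1)$.

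The substantive equivalence is $(1) \Leftrightarrow (5)$. For the easy direction $(5) \Rightarrow (1)$ I would use injectivity of $P$: from $T^* = PT$ we get $\mc N(T^*) = \mc N(PT) = \mc N(T)$, and $(4)$ has already been shown equivalent to $(1)$. For $(1) \Rightarrow (5)$ I would exploit that an $EP$ operator gives the orthogonal decomposition $\mc H = \mc R(T) \oplus \mc N(T)$ (since $\mc N(T)^\perp = \mc R(T^*) = \mc R(T)$) together with $\mc N(T^*) = \mc N(T)$. The restriction $T_0 := T|_{\mc R(T)} \colon \mc R(T) \to \mc R(T)$ is then a bounded bijection with bounded inverse (the range being closed), and likewise $T^*_0 := T^*|_{\mc R(T)}$ maps $\mc R(T)$ bijectively onto itself. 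I would then define $P$ blockwise relative to this decomposition: set $P = T^*_0 T_0^{-1}$ on $\mc R(T)$ and $P = I$ on $\mc N(T)$. A direct check, using that $T$ annihilates $\mc N(T)$ while $T^*$ annihilates $\mc N(T^*) = \mc N(T)$, shows $PT = T^*$, and $P$ is a bounded bijection because both diagonal blocks are.

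I expect the construction of $P$ in $(1) \Rightarrow (5)$ to be the main obstacle, as it is the only place requiring a genuine synthesis rather than a rewriting of the preliminary identities: one must first secure the orthogonal splitting and the invertibility of $T$ on its range, and then verify that the blockwise operator really intertwines $T$ and $T^*$ while remaining invertible. Everything else reduces to the projection picture of the Moore-Penrose inverse and to taking orthogonal complements.
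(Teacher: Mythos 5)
The paper does not prove this statement at all: it is quoted as a known characterization with citations to Pearl and Brock, so there is no internal proof to compare against. Your argument is correct and complete on its own terms: the equivalences $(1)\Leftrightarrow(3)\Leftrightarrow(4)$ are exactly the right bookkeeping with orthogonal complements of the closed subspaces $\mc R(T)$, $\mc R(T^*)$, $\mc N(T)$, $\mc N(T^*)$; identifying $TT^\dag$ and $T^\dag T$ as the orthogonal projections onto $\mc R(T)$ and $\mc R(T^*)$ respectively settles $(1)\Leftrightarrow(2)$; and for $(1)\Rightarrow(5)$ the block construction of $P$ (namely $T^*_0T_0^{-1}$ on $\mc R(T)$, the identity on $\mc N(T)$, with bounded invertibility of $T_0$ coming from the closed range and the bounded inverse theorem) does verify $PT=T^*$ with $P$ a bounded bijection, while $(5)\Rightarrow(1)$ via injectivity of $P$ and condition $(4)$ is immediate. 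This is the standard route to the result and I see no gaps.
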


\begin{example} Let  $T:\ell_2 \rightarrow \ell_2$ be defined by $$ T(x_1,x_2,x_3,x_4,x_5,\ldots)=(x_1+x_2, 2x_1+x_2+x_3,-x_1-x_3,x_4,x_5,\ldots).$$  Then $ T^*(x_1,x_2,x_3,x_4,x_5,\ldots)=(x_1+2x_2-x_3, x_1+x_2, x_2-x_3,x_4,\ldots) $ and $ \mc N(T)=\mc N(T^*)=\{(x_1,-x_1,-x_1,0,0,\ldots) : x_1\in \mathbb{C}\} $. But $ TT^*\neq T^*T $. Since $ \mc N(T) $ is finite dimensional, $ \mc R(T) $ is closed. Hence $ T $  is an $EP$ operator but not normal.

\end{example}

\begin{theorem}\label{th2.1}
	If $ \mc W $ is a subspace of $ \mathbb{C}^n $, then there exists an $ EP $ matrix $ T $ of order $ n $ such that $ \mc R(T)=\mc W $.
\end{theorem}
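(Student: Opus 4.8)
The plan is to build $T$ as a block operator adapted to the orthogonal decomposition $\mathbb{C}^n=\mc W\oplus\mc W^\perp$, forcing its range to be $\mc W$ and its kernel to be $\mc W^\perp$. By the characterization theorem quoted above, the $EP$ condition is equivalent to $\mc N(T)=\mc N(T^*)$ (equivalently $\mc R(T)=\mc R(T^*)$), so if I can arrange $T$ and $T^*$ to share the kernel $\mc W^\perp$, the matrix will be $EP$ automatically.

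Concretely, let $r=\dim\mc W$. If $r=0$ I take $T=0$. Otherwise I would first choose an orthonormal basis $w_1,\ldots,w_r$ of $\mc W$ and extend it to an orthonormal basis $w_1,\ldots,w_n$ of $\mathbb{C}^n$, so that $w_{r+1},\ldots,w_n$ span $\mc W^\perp$; collecting these as columns gives a unitary matrix $U=[\,w_1\mid\cdots\mid w_n\,]$. Then I would set
$$ T = U \begin{pmatrix} A & 0 \\ 0 & 0 \end{pmatrix} U^*, $$
where $A$ is any invertible $r\times r$ matrix.

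The verification is then straightforward. Since $A$ is invertible, the range of the middle block is $\mathrm{span}(e_1,\ldots,e_r)$, and applying $U$ carries this onto $\mathrm{span}(w_1,\ldots,w_r)=\mc W$; hence $\mc R(T)=\mc W$. Computing $T^*=U\begin{pmatrix} A^* & 0 \\ 0 & 0 \end{pmatrix}U^*$ shows in the same way that $\mc R(T^*)=\mc W$, so $\mc R(T)=\mc R(T^*)$ and $T$ is $EP$. To make $T$ \emph{non-normal} (the real point of the construction) I would, when $r\ge 2$, choose $A$ non-normal, for instance a triangular matrix with a nonzero off-diagonal entry, because
$$ TT^*-T^*T = U \begin{pmatrix} AA^*-A^*A & 0 \\ 0 & 0 \end{pmatrix} U^* $$
vanishes exactly when $A$ is normal.

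I do not expect a serious obstacle: the construction is explicit and the range/kernel bookkeeping is routine once the unitary change of basis is in place. The only points needing care are the degenerate cases (the zero subspace, and $\dim\mc W=1$, where every $1\times 1$ block is automatically normal, so a non-normal $T$ cannot be produced) and the observation that invertibility of $A$ is precisely what forces $\mc R(T)$ to be all of $\mc W$ rather than a proper subspace.
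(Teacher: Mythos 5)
Your proof is correct, but it takes a genuinely different route from the paper. You diagonalize the problem: pick an orthonormal basis of $\mc W$, extend to an orthonormal basis of $\mathbb{C}^n$, and conjugate the block matrix $\begin{pmatrix} A & 0\\ 0 & 0\end{pmatrix}$ (with $A$ invertible) by the resulting unitary $U$; the range and kernel bookkeeping then reduces to the invertible block, and non-normality of $T$ is controlled exactly by non-normality of $A$. The paper instead works directly in the standard coordinates: it writes $\mc W$ as a graph-type subspace (certain coordinates are prescribed linear combinations $\sum_k a_k x_k$ of the free ones), takes a basis $v_1,\ldots,v_{n-1}$ of $\mc W$ as columns, and inserts an extra, explicitly computed column $v'$ (built from the conjugates $\overline{a_k}$) whose role is to force every \emph{row} of $T$ to lie in $\mc W$ as well, giving $\mc R(T^*)=\mc W$ directly; it carries this out for $\dim\mc W=n-1$ and $n-2$ and indicates the general pattern. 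Your argument is cleaner and uniformly handles every rank $r$ in one stroke, and it isolates the non-normality question in the single block $A$ (correctly flagging that $r=1$ forces normality, consistent with the paper's Remark on rank-one $EP$ matrices); what it costs is explicitness, since one must first compute an orthonormal basis, whereas the paper's construction produces the entries of $T$ directly from a spanning set of $\mc W$ in the original coordinates, which is what the authors then use to manufacture their concrete examples.
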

\begin{proof}
	\sloppy If $\mc  W $ is a trivial subspace of $\mathbb{C}^{n} $, then it holds trivially.
	Without loss of generality, let $ \mc W $ be a subspace of $ \mathbb{C}^n $ with of  dimension $ n-1 $. Then $ \mc W $ can be expressed as $$\Big\{(x_1, x_2, \ldots, x_{i-1}, \sum_{k=1}^{n-1}a_k x_k, x_i, \ldots, x_{n-1}) \ : \ x_k \in \mathbb{C}, k=1, 2, \ldots, n-1\Big\}.$$  Let $$\left\{v_j=\big(
	x_{j1}, x_{j2}, \ldots, x_{j(i-1)}, \sum_{k=1}^{n-1}a_{k}x_{jk}, x_{ji}, \ldots, x_{j(n-1)}\big), j=1,2,\ldots,n-1\right\}$$ be a basis for $\mc  W $ which can be regarded as column vectors.
	
	Take $ T= \left[\begin{array}{cccccccc}
	v_1& v_2& \cdots& v_{i-1}& v^\prime &v_i& \cdots  & v_{n-1} 
	\end{array}\right]$ where $$ v^\prime =\left(\sum_{k=1}^{n-1} \overline{a_k} x_{k1},\sum_{k=1}^{n-1}\overline{a_k}x_{k2},\ldots,\sum_{j=1}^{n-1}\sum_{k=1}^{n-1}a_{j}\overline{a_k}x_{kj},\ldots,\sum_{k=1}^{n-1}\overline{a_k}x_{k(n-1)}\right).$$ Since the columns of $ T $ contain a basis of $\mc  W $, $ \mc R(T)=\mc W $. Now we need to show that $ T $ is $ EP $. But the selection of $ v^\prime  $ ensures that each row of $ T $ is in $\mc  W $. Hence $\mc R(T^*)=\mc W$. Therefore the result is true  when dimension of $ \mc W $ is $ n-1 $. 
	
	For the sake of completeness we also prove the result when the dimension of $ \mc W$ is  $ n-2$ and continuing the same technique to construct  $ EP $ matrices for lesser dimension of $ \mc W $.  Suppose that $\mc W $ is of  dimension  $ n-2 $.  Then $ \mc W $ can be expressed as 
	\begin{eqnarray*}
		\Big\{(x_1, x_2, \ldots, x_{i-1}, \sum_{k=1}^{n-2}a_k x_k, x_i, \ldots,x_{\ell-1},\sum_{k=1}^{n-2}b_k x_k, x_\ell,\ldots, x_{n-2}) \ : \ x_k \in \mathbb{C}, \\k=1, 2, \ldots, n-2\Big\}.
	\end{eqnarray*}
	Let 
	\begin{eqnarray*}
		\Big\{v_j=\big(
		x_{j1}, \ldots, x_{j(i-1)}, \sum_{k=1}^{n-2}a_{k}x_{jk}, x_{ji}, \ldots,x_{j(\ell-1)},  \sum_{k=1}^{n-2}b_{k}x_{jk},x_{j\ell},\ldots x_{j(n-2)}\big),  \\j=1,2,\ldots,n-2\Big\}
	\end{eqnarray*}
	be a basis for $\mc  W $ which can be regarded as column vectors.
	
	Take $ T= \left[\begin{array}{cccccccccccc}
	v_1& v_2& \cdots& v_{i-1}& v^\prime &v_i& \cdots & v_{\ell-1}& v^{\prime\prime} & v_\ell & \cdots & v_{n-2} 
	\end{array}\right]$ where 
	\begin{eqnarray*}
		v^\prime =\Bigg(\sum_{k=1}^{n-2} \overline{a_k} x_{k1},\sum_{k=1}^{n-2}\overline{a_k}x_{k2},\ldots,\sum_{j=1}^{n-2}\sum_{k=1}^{n-2}a_{j}\overline{a_k}x_{kj},\ldots,\sum_{j=1}^{n-2}\sum_{k=1}^{n-2}b_{j}\overline{a_k}x_{kj},\ldots,\\ \sum_{k=1}^{n-2}\overline{a_k}x_{k(n-2)}\Bigg)
	\end{eqnarray*}
	and
	\begin{eqnarray*}
		v^{\prime\prime}= \Bigg(\sum_{k=1}^{n-2} \overline{b_k} x_{k1},\sum_{k=1}^{n-2}\overline{b_k}x_{k2},\ldots,\sum_{j=1}^{n-2}\sum_{k=1}^{n-2}a_{j}\overline{b_k}x_{kj},\ldots,\sum_{j=1}^{n-2}\sum_{k=1}^{n-2}b_{j}\overline{b_k}x_{kj},\ldots,\\ \sum_{k=1}^{n-2}\overline{b_k}x_{k(n-2)}\Bigg). 
	\end{eqnarray*}
	As in the first case,  $ \mc R(T)=\mc R(T^*)=\mc W $. 
\end{proof}

\begin{remark}\label{rem1}
	If $ T $ is a complex  $ EP $ matrix of rank $ 1 $, then it must be normal, by the result (\cite{Campbell2009}, Theorem 1.3.3): If $ T $ is a complex matrix of rank $1$, then its Moore-Penrose inverse is of the form $ T^\dag = \frac{1}{\alpha}T^*$, where $ \alpha =\mbox{ trace } (T^*T).$
\end{remark}
\begin{remark}\label{thm_2_1}
	If $ T $ is a real  $ EP $ matrix of rank 1, then it must be a symmetric matrix. Indeed, as in Remark \ref{rem1}, T is a normal matrix. Hence by spectral theorem $ T=UDU^{*} $, for some unitary matrix $ U $  and  $$ D=\left[\begin{matrix}
	d & \mathbb O\\
	\mathbb O& \mathbb O
	\end{matrix}\right] $$
	where $ d= \mbox{ trace } (T) $ and $\mathbb O $ is the zero matrix of appropriate order. As $ T $ is real and $ T^*=UD^*U^* $, we have $ D=D^* $ and hence $ T $ is symmetric.
\end{remark}

%\begin{remark}
%If $ T $ is a complex $ EP $ matrix, then the Theorem \ref{thm_2_1} does not hold good. 
%\end{remark}

\begin{example}
	Let $\mc  W=\{(x_1,x_1+x_2,x_2) \ : \  x_1,x_2\in \mathbb{C}\} $ be a subspace of $ \mathbb{C}^3 $ with basis $ v_1=(1,1+i,i), v_2= (1,0,-1)$. By the proof of the Theorem \ref{th2.1}, we have  $ v^\prime=(2,1+i,1) $. Then $ T=\left[\begin{array}{ccr}
	1&2&1\\
	1+i&1+i&0\\
	i&i-1&-1
	\end{array}\right] $. Here $ T $ is an $ EP $ matrix (non-normal) with $ \mc R(T)=\mc W $.
\end{example}

\begin{conjecture}
	Let $ \mc W $ be a closed subspace of a Hilbert space $ \mc H $.  Then there exists an $ EP $ (non-normal) operator $T$ on $ \mc H $ such that $ \mc R(T) =\mc W$.
\end{conjecture}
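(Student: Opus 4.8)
The plan is to reduce the statement to producing a single non-normal invertible operator on the closed subspace $\mc W$ regarded as a Hilbert space in its own right. Since $\mc W$ is closed, we have the orthogonal decomposition $\mc H=\mc W\oplus\mc W^{\perp}$, and I would search for $T$ in block-diagonal form $T=S\oplus 0$ with $S\in\mc B(\mc W)$. For an operator of this shape one has $\mc R(T)=\mc R(S)$ and $\mc R(T^{*})=\mc R(S^{*})$, both contained in $\mc W$, so $T$ is $EP$ exactly when $S$ is $EP$, while $\mc R(T)=\mc W$ is equivalent to surjectivity of $S$. Consequently it suffices to exhibit an invertible (hence automatically $EP$) and non-normal $S\in\mc B(\mc W)$: then $T=S\oplus 0$ is bounded, has closed range $\mc R(T)=\mc W$, satisfies $\mc N(T)=\mc W^{\perp}=\mc N(T^{*})$, and is non-normal because $TT^{*}-T^{*}T=(SS^{*}-S^{*}S)\oplus 0$.

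The construction of $S$ is the concrete part, and I would use a finite-rank perturbation of the identity, which works uniformly whether $\mc W$ is finite- or infinite-dimensional. Assuming $\dim\mc W\geq 2$, fix orthonormal vectors $e_{1},e_{2}\in\mc W$ and set $S=I+e_{1}\otimes e_{2}$, where $(e_{1}\otimes e_{2})x=\langle x,e_{2}\rangle e_{1}$. Because $e_{1}\perp e_{2}$ the rank-one operator $N=e_{1}\otimes e_{2}$ is nilpotent, $N^{2}=0$, so $S=I+N$ is invertible with bounded inverse $S^{-1}=I-N$; in particular $S$ is surjective and $EP$. A direct computation with $S^{*}=I+e_{2}\otimes e_{1}$ gives
\[
SS^{*}-S^{*}S=e_{1}\otimes e_{1}-e_{2}\otimes e_{2}\neq 0,
\]
so $S$ is non-normal. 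Feeding this $S$ into the previous paragraph produces the required $EP$ non-normal operator $T$ with $\mc R(T)=\mc W$.

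The one genuine obstacle is not the infinite dimension but the low-dimensional degenerate cases, and I expect this is exactly where the hypothesis must be sharpened. If $\dim\mc W\leq 1$, then any $T$ with $\mc R(T)=\mc W$ has rank at most one, and by Remark~\ref{rem1} a rank-one complex $EP$ operator is forced to be normal; hence no non-normal example can exist and the statement fails as written. The construction above therefore requires $\dim\mc W\geq 2$, and I would state the conjecture with that proviso, after which the argument is complete for every closed subspace of dimension at least two, including all infinite-dimensional ones.
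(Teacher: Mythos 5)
The paper offers no proof of this statement---it is left as a conjecture, with only the finite-dimensional case settled in Theorem~\ref{th2.1} by an explicit column-by-column construction from a basis of $\mc W$. Your argument actually resolves the conjecture, and does so by a genuinely different and much cleaner route: decompose $\mc H=\mc W\oplus\mc W^{\perp}$, take $T=S\oplus 0$, and reduce everything to finding one invertible non-normal $S\in\mc B(\mc W)$, which your rank-one nilpotent perturbation $S=I+e_{1}\otimes e_{2}$ supplies. I checked the details: $N=e_{1}\otimes e_{2}$ satisfies $N^{2}=0$ by orthogonality, so $S^{-1}=I-N$ exists and $\mc R(T)=\mc R(S)=\mc W=\mc R(S^{*})=\mc R(T^{*})$, the range is closed, and $TT^{*}-T^{*}T=(e_{1}\otimes e_{1}-e_{2}\otimes e_{2})\oplus 0\neq 0$. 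Compared with the paper's Theorem~\ref{th2.1}, your construction is coordinate-free, works uniformly in finite and infinite dimensions, and guarantees non-normality rather than merely permitting it; the paper's basis computation buys an explicit matrix but nothing more. Your caveat about low dimensions is also correct and is a point the conjecture as stated overlooks: if $\dim\mc W\le 1$ every operator with range $\mc W$ has rank at most one, and (by the argument behind Remark~\ref{rem1}, which transfers verbatim to rank-one Hilbert-space operators since $EP$ forces $u\otimes v$ to have $v$ parallel to $u$) such an operator is necessarily normal, so the non-normal version genuinely requires $\dim\mc W\ge 2$. The one cosmetic gap is that you cite Remark~\ref{rem1}, which is stated only for matrices, rather than spelling out this one-line operator version; with that sentence added the proof is complete.
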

\section{Fuglede-Putnam type theorems for $ EP $ operators}
\noindent The well-known Fuglede theorem for a bounded operator is stated as follows.
\begin{theorem}\cite{Fuglede1950}\label{fug_thm}.
	Let $ N \in \mc B(\mc H) $ be a normal operator and $ A\in \mc B(\mc H)  $. If $ AN =NA $, then $ AN^* = N^*A $.
\end{theorem}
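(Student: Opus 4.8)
The plan is to prove the classical Fuglede theorem, which states that if $ N $ is normal and $ AN = NA $, then $ AN^* = N^*A $. The cleanest approach I know uses the holomorphic functional calculus together with the unitary groups generated by $ N $ and $ N^* $. First I would observe that if $ AN = NA $, then by induction $ AN^k = N^k A $ for all $ k \geq 0 $, and hence $ A\, p(N) = p(N)\, A $ for every polynomial $ p $. Passing to power series, this gives $ A\, e^{\bar\lambda N} = e^{\bar\lambda N}\, A $ for every $ \lambda \in \mathbb{C} $, since the exponential is the limit of its partial sums in operator norm.

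The key step is to promote this to a statement about $ N^* $. Since $ N $ is normal, $ N $ and $ N^* $ commute, so $ e^{\bar\lambda N^* - \lambda N} $ is unitary (its exponent is skew-adjoint, being of the form $ B - B^* $ with $ B = \bar\lambda N^* $). Define the operator-valued function
\begin{equation*}
F(\lambda) = e^{\lambda N^*}\, A\, e^{-\lambda N^*}.
\end{equation*}
Using the commutation relation $ A\, e^{\bar\lambda N} = e^{\bar\lambda N}\, A $ just established, I can rewrite $ F(\lambda) = e^{\lambda N^*} e^{-\bar\lambda N}\, A\, e^{\bar\lambda N} e^{-\lambda N^*} $, and because $ N $ is normal the two exponential factors surrounding $ A $ combine into unitary operators. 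Thus $ F(\lambda) $ is a product of a fixed bounded operator with unitaries, so $ \|F(\lambda)\| $ is bounded uniformly in $ \lambda \in \mathbb{C} $.

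On the other hand, $ F $ is an entire operator-valued function of $ \lambda $, since $ \lambda \mapsto e^{\lambda N^*} $ is entire in the operator-norm topology. For any fixed vectors $ x, y \in \mc H $, the scalar function $ \lambda \mapsto \langle F(\lambda) x, y\rangle $ is entire and bounded, hence constant by Liouville's theorem. Differentiating $ F(\lambda) $ at $ \lambda = 0 $ gives $ F'(0) = N^* A - A N^* $, and since $ F $ is constant this derivative vanishes, yielding $ N^* A = A N^* $, which is the desired conclusion.

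The main obstacle is the middle step: justifying that the exponential factors around $ A $ recombine into unitaries and thereby produce a uniform norm bound. This rests entirely on the normality of $ N $, which guarantees that $ N $ and $ N^* $ commute so that the Baker--Campbell--Hausdorff simplification $ e^{\lambda N^*} e^{-\bar\lambda N} = e^{\lambda N^* - \bar\lambda N} $ is valid and the resulting exponent is skew-adjoint. Without normality the factors would not commute, the exponent would fail to be skew-adjoint, and the boundedness of $ F $ would break down; this is precisely the point at which the hypothesis is essential and, correspondingly, where the analogous statement can fail for general $ EP $ operators.
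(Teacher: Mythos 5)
The paper does not actually prove this statement: Theorem \ref{fug_thm} is quoted with a citation to Fuglede's 1950 paper as known background, so there is no internal proof to compare against. Your argument is the classical Rosenblum proof of the Fuglede theorem, and it is correct as written: $AN=NA$ gives $A\,p(N)=p(N)\,A$ for polynomials and hence $A e^{\bar\lambda N}=e^{\bar\lambda N}A$ by norm convergence of the exponential series; normality of $N$ makes $\lambda N^{*}-\bar\lambda N$ skew-adjoint and justifies $e^{\lambda N^{*}}e^{-\bar\lambda N}=e^{\lambda N^{*}-\bar\lambda N}$, so $F(\lambda)=e^{\lambda N^{*}}Ae^{-\lambda N^{*}}$ is a unitary conjugate-like product with $\|F(\lambda)\|\le\|A\|$; Liouville applied to the bounded entire scalar functions $\lambda\mapsto\langle F(\lambda)x,y\rangle$ forces $F$ to be constant, and $F'(0)=N^{*}A-AN^{*}=0$. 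By contrast, Fuglede's original proof proceeds through the spectral measure of $N$ (and handles unbounded normal operators), while your route is shorter and entirely elementary for bounded operators. You correctly identify the single point where normality is indispensable, namely that the exponents commute so that the combined exponent is skew-adjoint; this is also consistent with the paper's Example \ref{ex_1} showing the conclusion fails for $EP$ operators, which need not satisfy $TT^{*}=T^{*}T$.
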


The following example illustrates that Fuglede theorem does not hold good for $ EP $ operators.  The theorem cannot be extended to the set of $ EP $ operators on $ \mc H $ even though every normal operator with a closed range is $ EP $.
\begin{example}\label{ex_1}
	\sloppy Consider the $ EP $ operator $ T $ on $ \ell_2 $ defined by $ T(x_1,x_2,x_3,\ldots)=(x_1-x_2,x_1+x_3,2x_1-x_2+x_3,x_4,\ldots) $ and $ A \in \mc B(\ell_2)$ defined by $ A(x_1,x_2,x_3,\ldots)=(x_2,-x_1+x_2-x_3,-2x_1+x_2,x_4,\ldots) $. Here $ AT=TA $ but $ AT^*\neq T^*A $.	
	
\end{example}
We have seen in the above example that Fuglede theorem is not true in general for  $ EP $ operators.  The following theorem is a Fuglede type theorem which proves that if an $ EP $ operator and a bounded operator commute, then the $ EP $  operator commutes with the Moore-Penrose inverse of the bounded operator. Our result just replaces the ``adjoint'' operation by the ``Moore-Penrose inverse'' in the Fuglede theorem stated in Theorem \ref{fug_thm}, however proofs are totally different.

\begin{theorem}\label{thm_3_1}
	Let $T$ be an $EP$ operator on $\mc H$ and $A\in \mc B(\mc H)$. If $AT=TA$, then $AT^\dag=T^\dag A$.
\end{theorem}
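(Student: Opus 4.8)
The plan is to use the structure theory of $EP$ operators to reduce the statement to the elementary fact that an operator which commutes with an invertible operator also commutes with its inverse. Everything takes place in the orthogonal decomposition of $\mc H$ induced by $T$. Since $T$ is $EP$, the characterization of $EP$ operators recalled above gives $\mc N(T)^\perp=\mc R(T)$, so $\mc H=\mc R(T)\oplus\mc N(T)$ is an orthogonal direct sum of closed, $T$-invariant subspaces. Writing operators as $2\times 2$ blocks in this decomposition, $T=\begin{pmatrix} T_1 & 0 \\ 0 & 0 \end{pmatrix}$, where $T_1$ is the restriction of $T$ to $\mc R(T)$. Because $T$ has closed range, $T_1$ is a bounded bijection of $\mc R(T)$ onto itself, hence invertible; checking the four Penrose equations then yields $T^\dag=\begin{pmatrix} T_1^{-1} & 0 \\ 0 & 0 \end{pmatrix}$, and in particular $TT^\dag=T^\dag T$ is the orthogonal projection $P$ onto $\mc R(T)$, as it must be for an $EP$ operator.

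Next I would write $A=\begin{pmatrix} A_{11} & A_{12} \\ A_{21} & A_{22} \end{pmatrix}$ and expand $AT=TA$ blockwise. The resulting equations are $A_{11}T_1=T_1A_{11}$, $T_1A_{12}=0$, and $A_{21}T_1=0$. Since $T_1$ is invertible, the last two force $A_{12}=0$ and $A_{21}=0$; that is, commuting with $T$ already forces $A$ to commute with the projection $P$ and thus to be block diagonal. This is the heart of the argument and the one place where the $EP$ hypothesis does real work: it is the identity $\mc R(T)=\mc N(T)^\perp$ that makes the decomposition orthogonal and the off-diagonal blocks disappear.

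Finally, from $A_{11}T_1=T_1A_{11}$ with $T_1$ invertible I obtain $A_{11}T_1^{-1}=T_1^{-1}A_{11}$ by multiplying on the left and on the right by $T_1^{-1}$, whence
\[
AT^\dag=\begin{pmatrix} A_{11}T_1^{-1} & 0 \\ 0 & 0 \end{pmatrix}=\begin{pmatrix} T_1^{-1}A_{11} & 0 \\ 0 & 0 \end{pmatrix}=T^\dag A,
\]
which is the desired identity.

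I expect no analytic obstacle here; the argument is purely structural, and the only points requiring care are the closedness and invertibility of $T_1$ on $\mc R(T)$ and the verification of the block form of $T^\dag$. It is worth noting \emph{why} this proof is necessarily different from Fuglede's and why the adjoint version genuinely fails (Example \ref{ex_1}): the same reduction turns the adjoint statement into the requirement $A_{11}T_1^{*}=T_1^{*}A_{11}$, and an operator commuting with $T_1$ commutes with $T_1^{*}$ only when $T_1$ is normal, which is exactly what breaks down for non-normal $EP$ operators; by contrast, commutation with the inverse $T_1^{-1}$ is automatic, and this is what keeps the Moore-Penrose version alive.
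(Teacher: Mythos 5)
Your proof is correct, but it follows a genuinely different route from the paper's. The paper argues by a single algebraic chain: starting from $AT^\dag=AT^\dag TT^\dag$ it repeatedly applies the Penrose identities together with the $EP$ property $TT^\dag=T^\dag T$ and the hypothesis $AT=TA$ until the string rearranges into $T^\dag A$; no spatial decomposition is used, and the argument would transfer verbatim to any ring with involution in which the Moore--Penrose inverse exists. You instead work with the orthogonal decomposition $\mc H=\mc R(T)\oplus\mc N(T)$ (legitimate precisely because $T$ is $EP$, so $\mc N(T)^\perp=\mc R(T)$), reduce $T$ to $\begin{pmatrix} T_1 & 0\\ 0 & 0\end{pmatrix}$ with $T_1$ a bounded bijection of the closed subspace $\mc R(T)$ (hence invertible by the open mapping theorem), verify $T^\dag=\begin{pmatrix} T_1^{-1} & 0\\ 0 & 0\end{pmatrix}$, and extract from $AT=TA$ the relations $A_{12}=A_{21}=0$ and $A_{11}T_1=T_1A_{11}$. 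All of these steps check out. What your approach buys is transparency: it shows the stronger fact that $A$ commutes with the projection $TT^\dag$ and is reduced by $\mc R(T)$, it isolates exactly where the $EP$ hypothesis enters (orthogonality of the decomposition, which kills the off-diagonal blocks), and your closing observation correctly pinpoints why the adjoint version fails for non-normal $EP$ operators while the Moore--Penrose version survives. What the paper's computation buys is brevity and algebraic generality, at the cost of being unilluminating. Both are valid; yours is arguably the more instructive proof, though you should state explicitly that the invertibility of $T_1$ invokes the open mapping theorem and that injectivity of $T_1$ uses $\mc R(T)\cap\mc N(T)=\{0\}$.
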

\begin{proof}
	As $T$ is an $EP$ operator, we have $TT^\dag =T^\dag T$. From the assumption $AT= TA$,  we have $AT^\dag=AT^\dag TT^\dag=AT(T^\dag)^2=TA(T^\dag)^2= TT^\dag TA(T^\dag)^2=T^\dag TAT(T^\dag)^2=T^\dag TAT^\dag=T^\dag ATT^\dag =T^\dag TT^\dag AT^\dag T=(T^\dag)^2TAT^\dag T=(T^\dag)^2ATT^\dag T=(T^\dag)^2AT=(T^\dag)^2TA=T^\dag A$.
\end{proof}

\begin{example}
	The assumption that $T$ is an $EP$ operator cannot be dropped in  Theorem \ref{thm_3_1}. For instance, $ A=T $ is an bounded operator on $ \ell_2 $ defined by $ T(x_1,x_2,x_3,\ldots)=(x_1+x_2,2x_1+2x_2,x_3,\ldots) $. Then $ T^\dag (x_1,x_2,x_3,\ldots)=(\frac{1}{10}(x_1+2x_2),\frac{1}{10}(x_1+2x_2),x_3,\ldots)  $. Note that $T$ is not an $EP$ operator and $ AT=TA$ but $AT^\dag\neq T^\dag A$.
\end{example}

Under some conditions, we prove that Fuglede theorem is true for  $ EP $ operators and we give examples which embellish that those conditions are necessary.

\begin{theorem}\label{thm_1_1}
	Let $T$ be an $EP$ operator on $\mc H$ and $A\in \mc B(\mc H)$. If $AT=TA$ and $AT^*T=T^*TA$, then $AT^*=T^*A$.
\end{theorem}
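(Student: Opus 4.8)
The plan is to recover the conclusion $AT^*=T^*A$ from the two hypotheses by routing everything through the Moore-Penrose inverse and the single structural identity $T^*=T^*TT^\dag$. First I would record that, since $TT^\dag$ is self-adjoint, taking adjoints in the Penrose relation $TT^\dag T=T$ yields $T^*=(TT^\dag T)^*=T^*(TT^\dag)^*=T^*TT^\dag$. This factorization is the heart of the argument, because it rewrites $T^*$ as the operator $T^*T$ followed by $T^\dag$ — precisely the two objects that the two hypotheses control.

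Next I would invoke Theorem \ref{thm_3_1}: since $T$ is $EP$ and $AT=TA$, we immediately obtain $AT^\dag=T^\dag A$. With this in hand the proof reduces to a short substitution chain. Starting from $AT^*=A\left(T^*TT^\dag\right)$, I would push $A$ to the right using the hypothesis $AT^*T=T^*TA$ to reach $T^*TAT^\dag$, then commute $A$ past $T^\dag$ via $AT^\dag=T^\dag A$ to obtain $T^*TT^\dag A$, and finally collapse $T^*TT^\dag$ back to $T^*$ by the identity above, landing on $T^*A$. Thus each hypothesis is consumed exactly once, removing one of the two obstructions in turn.

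The step I expect to be the crux is the opening factorization $T^*=T^*TT^\dag$; everything downstream is a bookkeeping substitution once $T^*$ has been expressed in terms of $T^*T$ and $T^\dag$. I would also emphasize that the $EP$ assumption is genuinely used only through Theorem \ref{thm_3_1} (to turn $AT=TA$ into $AT^\dag=T^\dag A$): the identity $T^*=T^*TT^\dag$ itself holds for any closed-range operator, so the whole weight of the normality-like structure is carried by that one invocation. A final sanity check worth including is that the companion identity $TT^\dag=T^\dag T$ coming from $EP$-ness is not even needed in the chain, which confirms that the hypotheses $AT=TA$ and $AT^*T=T^*TA$ are the minimal ingredients beyond $EP$.
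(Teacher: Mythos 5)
Your proposal is correct and is essentially the paper's own proof: the authors likewise write $AT^*=A(TT^\dag T)^*=AT^*TT^\dag$, apply the hypothesis $AT^*T=T^*TA$ and then Theorem \ref{thm_3_1} to move $A$ past $T^\dag$, and collapse $T^*TT^\dag$ back to $T^*$. Your added observation that the $EP$ hypothesis enters only through Theorem \ref{thm_3_1} is accurate but does not change the argument.
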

\begin{proof}
	Suppose $T\in \mc B(\mc H)$ is an $EP$ operator with $AT=TA$ and $AT^*T=T^*TA$. Then by Theorem \ref{thm_3_1},   we have $AT^*=A(TT^\dag T)^*=AT^*(TT^\dag)^*=AT^*TT^\dag=T^*TAT^\dag=T^*TT^\dag A=(TT^\dag T)^*A=T^*A$.
\end{proof}

\begin{theorem}\label{thm_1_2}
	Let $ T$ be an $ EP $ operator on $ \mc H $ and $ A\in \mc B(\mc H) $. If $ AT=TA $ and $ AT^\dag T^*=T^\dag T^*A $, then $ AT^*=T^*A $.
\end{theorem}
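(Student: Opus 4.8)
The plan is to mirror the short computational strategy already used for Theorem \ref{thm_1_1}, exploiting the fact that for an $EP$ operator the projection $TT^\dag=T^\dag T$ fixes the common range $\mc R(T)=\mc R(T^*)$. The first thing I would record is the auxiliary identity $TT^\dag T^*=T^*$. This holds because $T^\dag T$ is the orthogonal projection onto $\mc R(T^\dag)=\mc R(T^*)$, so $T^\dag TT^*=T^*$ in general, and the $EP$ hypothesis $TT^\dag=T^\dag T$ then upgrades this to $TT^\dag T^*=T^\dag TT^*=T^*$. (Equivalently: since $T$ is $EP$, $TT^\dag$ is the orthogonal projection onto $\mc R(T)=\mc R(T^*)$, hence acts as the identity on $\mc R(T^*)$.)

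With this identity in hand I would telescope the two commutation relations. Starting from $T^*=TT^\dag T^*$, I apply $A$ on the left and push it through the product:
$$AT^*=ATT^\dag T^*=TAT^\dag T^*=TT^\dag T^*A=T^*A,$$
where the second equality uses the hypothesis $AT=TA$, the third uses the hypothesis $AT^\dag T^*=T^\dag T^*A$ (bracketed as $T(AT^\dag T^*)=T(T^\dag T^*A)$), and the last equality is again $TT^\dag T^*=T^*$. This delivers $AT^*=T^*A$ at once.

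The only genuine content is the identity $TT^\dag T^*=T^*$ together with the correct bracketing of the chain, so I do not expect a substantive obstacle. The role of the $EP$ assumption is precisely to let $TT^\dag$ absorb $T^*$ on the left, which is what allows the factor $T^\dag T^*$ to transport $A$ from the left to the right while the outer $T$ reassembles $T^*$; without it the chain breaks, consistent with Example \ref{ex_1}. As an independent cross-check I would verify the conclusion by a range/kernel argument: combining the hypothesis $AT^\dag T^*=T^\dag T^*A$ with $AT^\dag=T^\dag A$ (Theorem \ref{thm_3_1}) gives $T^\dag(AT^*-T^*A)=0$, so $\mc R(AT^*-T^*A)\subseteq\mc N(T^\dag)=\mc N(T^*)$; on the other hand $AT=TA$ and $AT^\dag=T^\dag A$ force $A$ to commute with $P=TT^\dag$, whence $AT^*=APT^*=PAT^*$ shows $\mc R(AT^*-T^*A)\subseteq\mc R(T^*)$. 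Since $T$ is $EP$ we have $\mc R(T^*)\cap\mc N(T^*)=\{0\}$, so the commutator vanishes, confirming $AT^*=T^*A$.
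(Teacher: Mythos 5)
Your argument is correct and is essentially the paper's own proof: the paper's chain $AT^*=A(TT^\dag T)^*=AT^\dag TT^*=ATT^\dag T^*=TAT^\dag T^*=TT^\dag T^*A=T^*A$ is exactly your computation, with the identity $TT^\dag T^*=T^*$ obtained there from $(TT^\dag T)^*=T^*$ plus $TT^\dag=T^\dag T$ rather than from the projection description of $T^\dag T$. The range/kernel cross-check is a nice independent confirmation but adds nothing beyond the main chain.
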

\begin{proof}
	As $ T\in \mc B(\mc H) $ is an  $ EP $ operator, we have $TT^\dag=T^\dag T$.  From the given facts $ AT=TA $ and $ AT^\dag T^*=T^\dag T^*A $, we have $ AT^*=A(TT^\dag T)^*=AT^\dag TT^*=ATT^\dag T^*=  T AT^\dag T^*=TT^\dag T^*A=(TT^\dag T)^*A=T^*A$.
\end{proof}

\begin{example}
	The condition $AT^*T=T^*TA$ is essential in Theorem \ref{thm_1_1}. Consider the $EP$ operator $T$ on $ \ell_2 $ defined by $ T(x_1,x_2,x_3,\ldots)=(x_1+x_3,0,x_3,\ldots) $ 
	and $ A\in \mc B(\ell_2) $ defined by $ A(x_1,x_2,x_3,\ldots)=(x_1+2x_3,-x_2,x_3,\ldots)$. Then $AT=TA $ and  $AT^*T \neq T^*TA $. But 
	$AT^*\neq T^*A $.

\end{example}

\begin{example}
	The condition $ AT^\dag T^*=T^\dag T^*A $ cannot be dropped in Theorem \ref{thm_1_2}. Let $A$ and $T$ be as in Example \ref{ex_1}. Then $ AT=TA $  and $AT^\dag T^*\neq T^\dag T^*A$. But $AT^*\neq T^*A$.
\end{example}

%\begin{example}
% 
%\end{example}

Fuglede theorem was generalized for two normal operators by Putnam, which is well-known as Fuglede-Putnam theorem and is stated as follows.
\begin{theorem}\cite{Putnam1951}
	Let $N,M$ be bounded normal operators on $\mc H$ and  $A\in \mc B(\mc H)$. If $AN = MA$, then $AN^* = M^*A$.
\end{theorem}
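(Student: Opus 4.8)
The plan is to reduce this two-operator statement to the single-operator Fuglede theorem already recorded as Theorem \ref{fug_thm}, by passing to the orthogonal direct sum $\mc H \oplus \mc H$. On this doubled space I would introduce the block operators
$$ \widehat N = \begin{pmatrix} N & 0 \\ 0 & M \end{pmatrix}, \qquad \widehat A = \begin{pmatrix} 0 & 0 \\ A & 0 \end{pmatrix}, $$
both bounded on $\mc H \oplus \mc H$. The first observation is that $\widehat N$ is normal: computing the adjoint blockwise as $\widehat N^* = \begin{pmatrix} N^* & 0 \\ 0 & M^* \end{pmatrix}$ and using that $N$ and $M$ are normal gives $\widehat N \widehat N^* = \begin{pmatrix} NN^* & 0 \\ 0 & MM^*\end{pmatrix} = \begin{pmatrix} N^*N & 0 \\ 0 & M^*M\end{pmatrix} = \widehat N^* \widehat N$.

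Next I would translate the hypothesis $AN = MA$ into the commutation $\widehat A \widehat N = \widehat N \widehat A$. A direct block multiplication gives $\widehat A \widehat N = \begin{pmatrix} 0 & 0 \\ AN & 0\end{pmatrix}$ and $\widehat N \widehat A = \begin{pmatrix} 0 & 0 \\ MA & 0\end{pmatrix}$, so the two agree precisely because $AN = MA$. With $\widehat N$ normal and $\widehat A \widehat N = \widehat N \widehat A$, Theorem \ref{fug_thm} applies verbatim on $\mc H \oplus \mc H$ and yields $\widehat A \widehat N^* = \widehat N^* \widehat A$.

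Finally I would read off the conclusion from the lower-left block: since $\widehat A \widehat N^* = \begin{pmatrix} 0 & 0 \\ AN^* & 0\end{pmatrix}$ and $\widehat N^* \widehat A = \begin{pmatrix} 0 & 0 \\ M^* A & 0\end{pmatrix}$, the equality of these operators is exactly $AN^* = M^*A$, as desired. The only genuine content to check is that a block-diagonal operator built from two normal operators is again normal and that Fuglede's theorem is available on the larger Hilbert space (it is, being stated for an arbitrary Hilbert space); the block arithmetic is routine, so I do not anticipate a real obstacle—any subtlety is purely organizational.

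As a self-contained alternative one could instead run Rosenblum's argument directly: from $AN = MA$ deduce $M^n A = A N^n$ for all $n$ and hence $e^{\lambda M} A = A e^{\lambda N}$, equivalently $A = e^{-\bar\lambda M} A e^{\bar\lambda N}$, for every $\lambda \in \mathbb{C}$. Substituting this into $F(\lambda) = e^{\lambda M^*} A e^{-\lambda N^*}$ and using normality to combine exponentials, one rewrites $F(\lambda) = e^{\lambda M^* - \bar\lambda M}\, A\, e^{\bar\lambda N - \lambda N^*}$, a product of two unitaries (the exponents are skew-adjoint) sandwiching $A$, so that $\|F(\lambda)\| \le \|A\|$ for all $\lambda$. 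Since $F$ is entire and bounded, Liouville's theorem forces it to be constant, whence $e^{\lambda M^*} A = A e^{\lambda N^*}$; differentiating at $\lambda = 0$ then gives $M^* A = A N^*$.
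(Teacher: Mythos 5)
Your proof is correct. Note that the paper itself offers no proof of this statement---it is quoted with a citation to Putnam---so there is nothing internal to compare against; but your block-operator reduction is the standard and fully rigorous way to obtain the Putnam version from the one-operator Fuglede theorem (Theorem \ref{fug_thm}): the operators $\widehat N=\left(\begin{smallmatrix}N&0\\0&M\end{smallmatrix}\right)$ and $\widehat A=\left(\begin{smallmatrix}0&0\\A&0\end{smallmatrix}\right)$ on $\mc H\oplus\mc H$ do exactly what you claim, the normality of $\widehat N$ and the equivalence of $\widehat A\widehat N=\widehat N\widehat A$ with $AN=MA$ are routine block computations, and reading off the lower-left corner of $\widehat A\widehat N^*=\widehat N^*\widehat A$ gives $AN^*=M^*A$. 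Your alternative via Rosenblum's argument is also sound: the only points worth making explicit are that normality of $M$ (hence commutativity of $M$ with $M^*$) is what lets you combine $e^{\lambda M^*}e^{-\bar\lambda M}$ into a single exponential with skew-adjoint exponent, and that Liouville's theorem is applied to the scalar entire functions $\lambda\mapsto\langle F(\lambda)x,y\rangle$, which are bounded by $\|A\|\,\|x\|\,\|y\|$. Either route is acceptable; the first buys economy by leaning on the already-stated Theorem \ref{fug_thm}, while the second is self-contained.
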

Fuglede-Putnam theorem is not true in general if we replace bounded normal operators by $EP$ operators, as shown in the following example.
\begin{example}\label{ex_2_14}Consider the $EP$ operators $ T $ and $ S $ on $ \ell_2  $ are defined by $$T(x_1,x_2,x_3,\ldots)=(x_1+x_3,0,x_3,\ldots)   $$ and $S(x_1,x_2,x_3,\ldots)=(x_1+x_2,x_2,,0,x_4,\ldots)   $ and $ A\in \mc B(\ell_2) $ is defined by $ A(x_1,x_2,x_3,\ldots)=(x_1-x_3,x_3,2x_2,x_4,\ldots) $.  Then $ AT=SA $. But $ AT^*\neq S^*A.$ 
\end{example}

\begin{theorem}\label{thm_1_2_1}
	Let $T,S$ be $EP$ operators on $ \mc H $ and $A\in \mc B(\mc H)$. If $AT=SA$ and $AT^*T=S^*SA$, then $AT^*=S^*A$.
\end{theorem}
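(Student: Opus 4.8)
The plan is to imitate the proof of Theorem \ref{thm_1_1}, replacing the role played there by the single-operator Moore--Penrose commutation (Theorem \ref{thm_3_1}) with its two-operator, Putnam-type analogue. Concretely, once I know that the intertwining $AT=SA$ already forces $AT^\dag=S^\dag A$, the conclusion follows from the short chain
\begin{align*}
	AT^* &= A(TT^\dag T)^* = AT^*(TT^\dag)^* = AT^*TT^\dag\\
	&= S^*SAT^\dag = S^*SS^\dag A = (SS^\dag S)^*A = S^*A,
\end{align*}
where the first equality uses $T=TT^\dag T$, the third uses that $TT^\dag$ is self-adjoint, the fourth uses the hypothesis $AT^*T=S^*SA$ (multiplied on the right by $T^\dag$), the fifth uses the Putnam-type relation $AT^\dag=S^\dag A$, and the last two use $(SS^\dag S)^*=S^*SS^\dag$ together with $SS^\dag S=S$. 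Notice that the extra hypothesis enters only at the fourth step, exactly as in Theorem \ref{thm_1_1}.

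Thus the heart of the argument --- and the step I expect to be the main obstacle --- is to establish that, for $EP$ operators $T,S$ with $AT=SA$, one has $AT^\dag=S^\dag A$. This is the Putnam generalization of Theorem \ref{thm_3_1}, and since the asymmetry between $T$ on the right and $S$ on the left obstructs a direct imitation of the purely algebraic chain used there, I would instead argue geometrically using the orthogonal splittings the $EP$ hypothesis provides. Because $T$ and $S$ are $EP$, we have $\mc H=\mc R(T)\oplus\mc N(T)=\mc R(S)\oplus\mc N(S)$ with $\mc R(T)=\mc R(T^*)$ and $\mc N(T)=\mc N(T^*)$ (and similarly for $S$); in particular $T$ maps $\mc R(T)$ bijectively onto itself and annihilates $\mc N(T)$.

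First I would record that $AT=SA$ forces $A\,\mc N(T)\subseteq\mc N(S)$ (if $Tz=0$ then $SAz=ATz=0$) and $A\,\mc R(T)\subseteq\mc R(S)$ (if $y=Tx$ then $Ay=ATx=SAx\in\mc R(S)$). Writing $T_1,S_1$ for the invertible restrictions of $T,S$ to $\mc R(T),\mc R(S)$ and $A_1=A|_{\mc R(T)}\colon\mc R(T)\to\mc R(S)$, the relation $AT=SA$ restricts to $A_1T_1=S_1A_1$. Since $T^\dag$ agrees with $T_1^{-1}$ on $\mc R(T)$ and vanishes on $\mc N(T)=\mc N(T^\dag)$, verifying $AT^\dag=S^\dag A$ separately on $\mc R(T)$ and on $\mc N(T)$ reduces to the identity $A_1T_1^{-1}=S_1^{-1}A_1$ on the range part (immediate from $A_1T_1=S_1A_1$ by invertibility) and to $0=0$ on the kernel part; feeding the result into the displayed chain then gives $AT^*=S^*A$.

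I would also note that the same decomposition yields a self-contained route sidestepping Moore--Penrose inverses altogether: on $\mc N(T)$ both $AT^*$ and $S^*A$ vanish (as $\mc N(T)=\mc N(T^*)$ and $A\,\mc N(T)\subseteq\mc N(S)=\mc N(S^*)$), while on $\mc R(T)$ the restricted hypotheses read $A_1T_1=S_1A_1$ and $A_1T_1^*T_1=S_1^*S_1A_1$; substituting $S_1A_1=A_1T_1$ into the latter and cancelling the invertible $T_1$ on the right gives $A_1T_1^*=S_1^*A_1$, which is precisely $AT^*=S^*A$ on $\mc R(T)$. In either route the one point demanding care is the justification that, for an $EP$ operator, the orthogonal summands $\mc R(T)$ and $\mc N(T)$ are genuinely invariant under $T$ and $T^*$ and that $T_1$ is invertible, so that restriction, adjunction within $\mc R(T)$, and cancellation are all legitimate.
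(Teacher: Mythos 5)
Your proposal is correct, and its concluding chain of equalities is exactly the one the paper uses; the difference lies entirely in how you justify the key intermediate fact $AT^\dag=S^\dag A$. The paper obtains this from its Theorem \ref{thm_3_2}, which is proved by a purely algebraic string of Moore--Penrose identities combined with $TT^\dag=T^\dag T$ and $SS^\dag=S^\dag S$ --- so your remark that the asymmetry ``obstructs a direct imitation'' of the algebraic argument of Theorem \ref{thm_3_1} is not borne out: the paper carries out precisely that imitation, inserting $SS^\dag S$ for $S$ and shuttling $A$ across via $AT=SA$. (Incidentally, the paper's proof of Theorem \ref{thm_1_2_1} uses this fact before Theorem \ref{thm_3_2} is formally stated, a forward reference your write-up avoids.) Your geometric substitute --- splitting $\mc H=\mc R(T)\oplus\mc N(T)=\mc R(S)\oplus\mc N(S)$, checking $A\,\mc R(T)\subseteq\mc R(S)$ and $A\,\mc N(T)\subseteq\mc N(S)$, and reducing to the invertible parts $A_1T_1^{-1}=S_1^{-1}A_1$ --- is sound: for an $EP$ operator the two summands are indeed reducing for both $T$ and $T^*$, $T_1$ is a bounded bijection of the closed subspace $\mc R(T)$ and hence boundedly invertible, and $T^\dag$ is $T_1^{-1}\oplus 0$ with respect to this decomposition. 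What the algebraic route buys is brevity and independence from any spatial picture; what your route buys is transparency about why the result holds, and it yields as a bonus your second, self-contained argument (cancelling the invertible $T_1$ in $A_1T_1^*T_1=S_1^*A_1T_1$) that dispenses with Moore--Penrose inverses altogether --- something the paper does not do.
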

\begin{proof}
	Suppose that $T,S\in \mc B(\mc H)$ are $EP$ operators with $AT=SA$ and $AT^*T=S^*SA$. Then we have $AT^*=A(TT^\dag T)^*=AT^*TT^\dag=S^*SAT^\dag=S^*SS^\dag A=(SS^\dag S)^*A=S^*A$.
\end{proof}
\begin{example}
	The condition  $AT^*T=S^*SA$ in Theorem \ref{thm_1_2_1} is essential. Let $ T,S$ be $ EP $ operators  and $ A $ be  the operator as in Example \ref{ex_2_14}. Here $ AT^*T\neq S^*SA $ and $ AT=SA $ but $ AT^*\neq S^*A$.
\end{example}
\begin{theorem}\label{thm_1_3}
	Let $ T,S$ be  $ EP $ operators on  $\mc H$  and $ A\in \mc B(\mc H) $. If $ AT=SA $ and $ AT^\dag T^*=S^\dag S^*A $, then $ AT^*=S^*A $.
\end{theorem}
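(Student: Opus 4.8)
The plan is to imitate the single-chain computation used for Theorem~\ref{thm_1_2}, now carried out in the two-operator (Putnam-style) setting: I will push the factor $T^*$ sitting on the $T$-side over to the $S$-side by combining the intertwining relation $AT=SA$ with the twisted commutation hypothesis $AT^\dag T^*=S^\dag S^*A$. No analysis is needed; the whole argument is a string of equalities once the right groupings of the Moore--Penrose axioms are spotted. In particular, setting $S=T$ should recover the proof of Theorem~\ref{thm_1_2}, so this is the natural common generalization of Theorems~\ref{thm_1_2} and~\ref{thm_1_2_1}.

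First I would record the two structural identities I intend to reuse. Since $T$ and $S$ have closed range, the Moore--Penrose axioms give $T=TT^\dag T$ and $S=SS^\dag S$, with $TT^\dag,\,T^\dag T,\,SS^\dag,\,S^\dag S$ all self-adjoint. Taking adjoints after the grouping $T=T(T^\dag T)$ yields $(TT^\dag T)^*=(T^\dag T)^*T^*=T^\dag T\,T^*$, and likewise $(SS^\dag S)^*=S^\dag S\,S^*$. Because $T$ and $S$ are $EP$, we have $TT^\dag=T^\dag T$ and $SS^\dag=S^\dag S$; feeding these into the previous identities produces the two key relations $TT^\dag T^*=T^*$ and $SS^\dag S^*=S^*$. (Conceptually, $TT^\dag$ is the orthogonal projection onto $\mc R(T)=\mc R(T^*)$ and therefore fixes $\mc R(T^*)$, and similarly for $S$.)

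Next I would run the main computation. Starting from $AT^*=A(TT^\dag T)^*$ and substituting $(TT^\dag T)^*=T^\dag T\,T^*$ followed by the $EP$ identity $T^\dag T=TT^\dag$, I obtain $AT^*=ATT^\dag T^*$. Factoring off $AT$ and applying the intertwining hypothesis gives $ATT^\dag T^*=(AT)T^\dag T^*=(SA)T^\dag T^*=S\bigl(AT^\dag T^*\bigr)$. The twisted commutation hypothesis $AT^\dag T^*=S^\dag S^*A$ then transfers everything to the $S$-side, $S\bigl(AT^\dag T^*\bigr)=SS^\dag S^*A$, and the structural identity $SS^\dag S^*=S^*$ collapses this to $S^*A$, so $AT^*=S^*A$, as desired.

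The step I expect to require the most care is the adjoint bookkeeping in $(TT^\dag T)^*$: one must choose the grouping $T(T^\dag T)$ rather than $(TT^\dag)T$, so that the self-adjoint factor $T^\dag T$ survives and can be converted to $TT^\dag$ by the $EP$ hypothesis, together with the clean justification of $SS^\dag S^*=S^*$ for $EP$ operators. Everything else is purely formal, and the argument terminates exactly where the hypotheses $AT=SA$ and $AT^\dag T^*=S^\dag S^*A$ are consumed.
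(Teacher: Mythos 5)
Your proposal is correct and follows essentially the same chain of equalities as the paper's proof: $AT^*=A(TT^\dag T)^*=AT^\dag TT^*=ATT^\dag T^*=SAT^\dag T^*=SS^\dag S^*A=S^*A$, using the $EP$ identities $TT^\dag=T^\dag T$ and $SS^\dag=S^\dag S$ at the same points. The only difference is presentational: you isolate $TT^\dag T^*=T^*$ and $SS^\dag S^*=S^*$ as explicit lemmas before running the computation, which the paper leaves implicit.
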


\begin{proof}
	As $ T$ and $ S$ are $ EP $ operators with $ AT=SA $ and $ AT^\dag T^*=S^\dag S^*A $, we have $ AT^*=A(TT^\dag T)^*=AT^\dag TT^*= ATT^\dag T^*=SAT^\dag T^*=SS^\dag S^*A=(SS^\dag S)^*A=S^*A. $
\end{proof}

\begin{example}
	The condition $ AT^\dag T^*=S^\dag S^*A $ in Theorem \ref{thm_1_3} is essential. Let $ T,S$ be $ EP $ operators  and $ A $ be  the operator as in Example \ref{ex_2_14}.  Here 
	$ AT^\dag T^* \neq S^\dag S^* A$  and $ AT=SA $ but $ AT^*\neq S^*A$.
\end{example}

The following Fuglede-Putnam type theorem for $ EP $ operators is a generalization of Theorem \ref{thm_3_1} involving two $EP$ operators.
\begin{theorem}\label{thm_3_2}
	Let $ T,S$ be  $ EP $ operators on $ \mc H $ and  $A\in \mc B(\mc H)$. If $AT=SA$, then $AT^\dag=S^\dag A$.
\end{theorem}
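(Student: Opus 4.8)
The plan is to run the same kind of purely algebraic chain that proves Theorem \ref{thm_3_1}, now with the intertwining $AT=SA$ playing the role of the commutation $AT=TA$. Since $T$ and $S$ are $EP$, I would keep the identities $TT^\dag=T^\dag T$ and $SS^\dag=S^\dag S$ on hand, together with the four defining Moore--Penrose relations for each of $T$ and $S$. The two facts I would extract first from $AT=SA$ are the ``one-sided'' relations obtained by multiplying by a single pseudoinverse: right-multiplying by $T^\dag$ gives $ATT^\dag=SAT^\dag$, while left-multiplying by $S^\dag$ and then using $S^\dag S=SS^\dag$ gives $S^\dag AT=S^\dag SA=SS^\dag A$. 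These are the only places the hypothesis enters, so the whole argument is a matter of transporting an $AT$ past inert factors and converting it into an $SA$ (or back) exactly when the two are adjacent.

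With these in hand, I would start from $S^\dag A$ and rewrite $S^\dag A=S^\dag SS^\dag A=(S^\dag)^2SA=(S^\dag)^2AT$, using $SS^\dag=S^\dag S$ and then $SA=AT$. Inserting $T=TT^\dag T$ and pushing $ATT^\dag=SAT^\dag$ through turns this into $(S^\dag)^2SAT^\dag T$; collapsing $(S^\dag)^2S=S^\dag$ and applying $T^\dag T=TT^\dag$ yields $S^\dag ATT^\dag$, and one more use of $AT=SA$ with $S^\dag S=SS^\dag$ brings the expression to $SS^\dag AT^\dag$. The genuinely non-formal step, which I expect to be the main obstacle, is the final reduction $SS^\dag AT^\dag=AT^\dag$; everything preceding it is bookkeeping with the $EP$ and Moore--Penrose identities, whereas this step is where the content of the intertwining really lives.

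That crux amounts to the range containment $\mc R(AT^\dag)\subseteq\mc R(S)$, i.e.\ that $A$ sends $\mc R(T)$ into $\mc R(S)$, which is precisely what $AT=SA$ encodes. I would establish it algebraically by first simplifying $AT^\dag=A(T^\dag TT^\dag)=A(TT^\dag)T^\dag=(ATT^\dag)T^\dag=SA(T^\dag)^2$ (again using $T^\dag T=TT^\dag$ and $ATT^\dag=SAT^\dag$); since this exhibits $AT^\dag$ with a leading factor $S$, we get $SS^\dag AT^\dag=SS^\dag SA(T^\dag)^2=SA(T^\dag)^2=AT^\dag$, completing the chain $S^\dag A=AT^\dag$. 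A slicker alternative I would keep in reserve is to reduce directly to Theorem \ref{thm_3_1}: on $\mc H\oplus\mc H$ set $\mathcal T=\left[\begin{smallmatrix}S&0\\0&T\end{smallmatrix}\right]$ and $\mathcal A=\left[\begin{smallmatrix}0&A\\0&0\end{smallmatrix}\right]$. Then $\mathcal T$ is $EP$ with $\mathcal T^\dag=\left[\begin{smallmatrix}S^\dag&0\\0&T^\dag\end{smallmatrix}\right]$, the hypothesis $AT=SA$ is exactly $\mathcal A\mathcal T=\mathcal T\mathcal A$, and Theorem \ref{thm_3_1} immediately gives $\mathcal A\mathcal T^\dag=\mathcal T^\dag\mathcal A$, whose off-diagonal entry is the desired $AT^\dag=S^\dag A$.
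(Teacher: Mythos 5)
Your main argument is correct and is essentially the paper's own proof read from the other end: the paper establishes the identity by the single chain $AT^\dag=AT^\dag TT^\dag=AT(T^\dag)^2=SA(T^\dag)^2=\cdots=(S^\dag)^2SA=S^\dag A$, using exactly the same ingredients you list ($TT^\dag=T^\dag T$, $SS^\dag=S^\dag S$, the Moore--Penrose identities, and substitution of $SA$ for $AT$ wherever they become adjacent). Every step of your chain checks out, including the final reduction $SS^\dag AT^\dag=AT^\dag$, which you rightly justify by first exhibiting $AT^\dag=SA(T^\dag)^2$ so that $SS^\dag$ absorbs into the leading $S$; note, though, that this step is no less formal than the rest --- it is the same kind of bookkeeping, so there is no real ``obstacle'' there. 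Your reserve argument is genuinely different from the paper and arguably cleaner: putting $\mathcal T=\left[\begin{smallmatrix}S&0\\0&T\end{smallmatrix}\right]$ and $\mathcal A=\left[\begin{smallmatrix}0&A\\0&0\end{smallmatrix}\right]$ on $\mc H\oplus\mc H$ turns the intertwining $AT=SA$ into a commutation with the single $EP$ operator $\mathcal T$ (which is $EP$ with closed range since $\mathcal T\mathcal T^\dag$ and $\mathcal T^\dag\mathcal T$ agree blockwise), so Theorem \ref{thm_3_1} applies directly and its off-diagonal entry is the claim. This is the Berberian-style reduction of the Putnam form to the Fuglede form; the paper does not exploit it and instead reproves the two-operator case from scratch, so your second route buys a genuine economy, while the paper's (and your first) computation has the merit of being entirely self-contained.
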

\begin{proof}
	As $T$ and $S$ are $EP$ operators, we have $TT^\dag=T^\dag T$ and  $SS^\dag=S^\dag S$.  From the given fact $AT= SA$, we have $AT^\dag=AT^\dag TT^\dag=AT(T^\dag)^2=SA(T^\dag)^2= SS^\dag SA(T^\dag)^2~~=~~S^\dag SAT(T^\dag)^2~~=~~S^\dag SAT^\dag~~=~~~S^\dag ATT^\dag ~~~=\\S^\dag SS^\dag AT^\dag T=(S^\dag)^2SAT^\dag T=(S^\dag)^2ATT^\dag T=(S^\dag)^2AT=(S^\dag)^2SA=S^\dag A$.
\end{proof}
\begin{example}
	In the Theorem \ref{thm_3_2}, if one of the operators, $ T $ or $ S $ fails to be $ EP $, then the theorem is not valid. Consider the $EP$ operator $T$ on $ \ell_2 $ defined by $ T(x_1,x_2,x_3,\ldots)=(x_1+x_3,0,x_3,\ldots) $  and the non-$EP$ operator $ S $ on $ \ell_2 $ defined by $ S(x_1,x_2,x_3,\ldots)=(x_1+x_2,0,0,x_4,\ldots) $. Let $ A\in \mc B(\ell_2)$ be defined by  $ A(x_1,x_2,x_3,\ldots)=(x_2+2x_3,-x_2,-x_2,x_4) $. Then $ AT=SA $. But $ AT^\dag \neq S^\dag A$.
\end{example}
\begin{theorem}\label{them1}
	Let $ T,S$ be $ EP $ operators on $ \mc H $. If $ A,B\in \mc B(\mc H) $ with $ AT=SB $ and $ AT^2=S^2B $, then $ AT^\dag =S^\dag B $.
\end{theorem}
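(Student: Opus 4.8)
The plan is to follow the same strategy of chaining the Moore-Penrose axioms with the $EP$ relations that powers the proofs of Theorems \ref{thm_3_1} and \ref{thm_3_2}, but here the presence of two distinct bounded operators $A,B$ (rather than a single $A$ commuting with $T$) forces both hypotheses to be used in genuinely different roles: the relation $AT=SB$ will take care of the ``$B$-side'', while $AT^2=S^2B$ is exactly what allows $T^\dag$ to be recognised on the ``$T$-side''. Throughout I would use only the four defining identities of the Moore-Penrose inverse together with the $EP$ relations $TT^\dag=T^\dag T$ and $SS^\dag=S^\dag S$, and the two elementary consequences $S^\dag S^2=S$ and $T^2T^\dag=T$ (both immediate from $EP$ and $SS^\dag S=S$, $TT^\dag T=T$).

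First I would digest the second hypothesis. Multiplying $AT^2=S^2B$ on the left by $S^\dag$ and using $S^\dag S^2=S$ together with $SB=AT$ gives
\[
S^\dag A T^2 = AT .
\]
Multiplying this on the right by $T^\dag$ and using $T^2T^\dag=T$ then yields the key \emph{intertwining identity}
\[
S^\dag A T = A T T^\dag ,
\]
which I regard as the heart of the argument: it converts the projection $TT^\dag$ sitting to the right of $A$ into a factor $S^\dag$ standing to its left.

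With this identity available the remainder is bookkeeping. Writing $T^\dag=T(T^\dag)^2$ (a consequence of $T^\dag TT^\dag=T^\dag$ and $EP$), I would compute $AT^\dag=AT(T^\dag)^2=(ATT^\dag)T^\dag$ and apply the intertwining identity twice to reach $AT^\dag=(S^\dag)^2AT$. Separately, the first hypothesis alone gives $S^\dag AT=S^\dag SB=SS^\dag B$, whence $(S^\dag)^2AT=S^\dag(SS^\dag B)=S^\dag B$ by $S^\dag SS^\dag=S^\dag$. Concatenating the two computations produces $AT^\dag=(S^\dag)^2AT=S^\dag B$, which is the claim.

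The step I expect to be the genuine obstacle is manufacturing the intertwining identity $S^\dag AT=ATT^\dag$; without it one is left with mismatched factors of $B$, $(T^\dag)^2$ and $S^2$ that refuse to telescope. The point is that $AT=SB$ by itself is not enough (it yields only $S^\dag B=(S^\dag)^2AT$), and the precise role of the second hypothesis $AT^2=S^2B$ is to trade a right-hand $TT^\dag$ for a left-hand $S^\dag$; once that exchange is in place the two $EP$ operators can be played against each other symmetrically and the identity closes.
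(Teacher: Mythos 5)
Your proof is correct and follows essentially the same route as the paper's: both arguments use only the Moore--Penrose identities together with the $EP$ relations $TT^\dag=T^\dag T$, $SS^\dag=S^\dag S$ to trade $S=S^\dag S^2$ and $T^2T^\dag=T$ and substitute the two hypotheses. Your version merely packages the paper's single long chain more cleanly by isolating the intertwining identity $S^\dag AT=ATT^\dag$ and applying it twice.
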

\begin{proof}
	Suppose that $ A,B,T,S\in \mc B(\mc H) $ with $ AT=SB $ and $ AT^2=S^2B $, where $ T$ and $S$ are $ EP $ operators. Then $ AT^\dag=A(T^\dag T T^\dag)=ATT^\dag T^\dag= SBT^\dag T^\dag=SS^\dag SBT^\dag T^\dag=S^\dag S^2BT^\dag T^\dag=S^\dag AT^2T^\dag T^\dag=S^\dag ATT^\dag =S^\dag S^\dag S ATT^\dag= S^\dag S^\dag S^2BT^\dag=S^\dag S^\dag A T^2T^\dag =S^\dag S^\dag AT=S^\dag S^\dag SB=S^\dag B$.
\end{proof}
\begin{example}
	The assumptions that $ T$ and $S$ are $ EP $ operators in Theorem \ref{them1} cannot be dropped. For instance, let $ A,B,T,S \in \mc B(\ell_2) $ be defined by $ A(x_1,x_2,x_3,\ldots)=(x_2,x_1,x_3,\ldots),\\ B=I, T(x_1,x_2,x_3,\ldots)=(x_1+x_2,-x_1-x_2,x_3,\ldots)$ and $ S(x_1,x_2,x_3,\ldots)=(-x_1-x_2,x_1+x_2,x_3,\ldots) $.  Here both $ T,S $ are not $ EP $ operators with $ AT=S=SB $. But $ AT^\dag\neq S^\dag B$.
\end{example}
\begin{example}
	The condition $ AT^2=S^2B $ in Theorem \ref{them1} is essential. For instance, let $ T,S\in \mc B(\ell_2) $  be $ EP $ operators defined by $T(x_1,x_2,x_3,\ldots)=(x_1-x_2,x_1+x_3,2x_1-x_2+x_3,x_4, \ldots) $ and $ S(x_1,x_2,x_3,\ldots)=(x_1+x_2,x_2,x_3,\ldots) $ and let $ A,B\in \mc B(\ell_2) $ be defined by $ A(x_1,x_2,x_3,\ldots)=(x_1+2x_2-x_3,-x_1-x_2+x_3,2x_1+2x_2-2x_3,x_4, \ldots) $ and $B(x_1,x_2,x_3,\ldots)=(x_1+x_3,0,x_1+x_2,x_4, \ldots)  $  be  such that $ AT=SB $ and $ AT^2 \neq S^2B $. But $ AT^\dag\neq S^\dag B$.
\end{example}

\begin{theorem}\label{thm_3_3}
	Let $T$ be an $EP$ operator on $ \mc H $ and $A, B\in \mc B(\mc H)$. If $AT=TB$ and $BT=TA$, then $AT^\dag=T^\dag B$ and $BT^\dag=T^\dag A$.
\end{theorem}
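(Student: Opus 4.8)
The plan is to reduce this coupled statement to the single-operator result of Theorem \ref{thm_3_1} by passing to the square $T^2$. The engine is an algebraic fact that I would establish first: if $T$ is $EP$, then $T^2$ has closed range, is $EP$, and $(T^2)^\dag=(T^\dag)^2$. Set $P=TT^\dag=T^\dag T$, the two expressions agreeing precisely because $T$ is $EP$; then $P$ is a self-adjoint idempotent with $TP=PT=T$ and $T^\dag P=PT^\dag=T^\dag$, each following from $TT^\dag T=T$ or $T^\dag TT^\dag=T^\dag$ combined with $TT^\dag=T^\dag T$. Substituting these into the candidate $(T^\dag)^2$ gives $T^2(T^\dag)^2=P=(T^\dag)^2T^2$, as well as $T^2(T^\dag)^2T^2=T^2$ and $(T^\dag)^2T^2(T^\dag)^2=(T^\dag)^2$; since $P$ is self-adjoint, $(T^\dag)^2$ satisfies all four defining equations of the Moore-Penrose inverse, so $(T^2)^\dag=(T^\dag)^2$ (and in particular $T^2$ has closed range). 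As $T^2(T^2)^\dag=P=(T^2)^\dag T^2$, the operator $T^2$ is $EP$.

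With this lemma the two hypotheses decouple. Multiplying $AT=TB$ on the right by $T$ and invoking $BT=TA$ gives $AT^2=(AT)T=(TB)T=T(BT)=T(TA)=T^2A$, and symmetrically $BT^2=T^2B$; thus $A$ and $B$ each commute with the $EP$ operator $T^2$. Applying Theorem \ref{thm_3_1} to $T^2$ then yields $A(T^2)^\dag=(T^2)^\dag A$ and $B(T^2)^\dag=(T^2)^\dag B$, that is, $A(T^\dag)^2=(T^\dag)^2A$ and $B(T^\dag)^2=(T^\dag)^2B$.

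The conclusion follows from a short computation that transports $B$ across $T^\dag$, using $T^\dag=T^\dag TT^\dag$, the $EP$ identity $T^\dag T=TT^\dag$, the hypothesis $AT=TB$, the commutation $B(T^\dag)^2=(T^\dag)^2B$, and finally $T(T^\dag)^2=TT^\dag T^\dag=T^\dag TT^\dag=T^\dag$:
\[
AT^\dag=AT^\dag TT^\dag=AT(T^\dag)^2=TB(T^\dag)^2=T(T^\dag)^2B=T^\dag B.
\]
Since the pair of hypotheses $AT=TB$, $BT=TA$ is invariant under interchanging $A$ and $B$, the same computation with the roles of $A$ and $B$ swapped gives $BT^\dag=T^\dag A$.

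The step I expect to be the main obstacle is the reverse-order law $(T^2)^\dag=(T^\dag)^2$, which is false for a general closed-range operator; it is exactly the $EP$ hypothesis---forcing $TT^\dag$ and $T^\dag T$ to coincide and to act as the identity on the common range $\mc R(T)=\mc R(T^*)$---that rescues it. Once that lemma is in place, the remainder is routine manipulation with the Moore-Penrose identities, in the same spirit as the proof of Theorem \ref{thm_3_1}.
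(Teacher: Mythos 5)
Your proof is correct, but it takes a genuinely different route from the paper's. The paper decouples the two hypotheses by a linearity trick: from $AT=TB$ and $BT=TA$ it forms $(A+B)T=T(A+B)$ and $(A-B)T=(-T)(A-B)$, applies Theorem \ref{thm_3_1} to the first and Theorem \ref{thm_3_2} (with the $EP$ operator $-T$, noting $(-T)^\dag=-T^\dag$) to the second, and then adds and subtracts the two resulting identities to isolate $AT^\dag-T^\dag B$ and $BT^\dag-T^\dag A$. You instead decouple by squaring: $AT^2=T^2A$ and $BT^2=T^2B$, then invoke Theorem \ref{thm_3_1} for the $EP$ operator $T^2$ and finish with a direct computation. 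The price you pay is the auxiliary lemma that for $EP$ operators $(T^2)^\dag=(T^\dag)^2$ and $T^2$ is again $EP$; your verification of the four Moore--Penrose equations via the projection $P=TT^\dag=T^\dag T$ is correct, and you are right that this reverse-order law is exactly where the $EP$ hypothesis enters (it fails for general closed-range operators). What your route buys is this independently useful lemma and the fact that you only need the single-operator Theorem \ref{thm_3_1}, never the two-operator Theorem \ref{thm_3_2}; what the paper's route buys is brevity and a template that transfers verbatim to the two-operator version proved immediately afterwards (where $T$ and $S$ are different and squaring would not decouple the hypotheses). Both arguments are complete and correct.
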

\begin{proof}
	From given hypotheses, $(A+B)T=T(A+B)$. By Theorem \ref{thm_3_1},
	\begin{eqnarray}\nonumber
		(A+B)T^\dag&=&T^\dag(A+B)\\ \nonumber
		AT^\dag+BT^\dag&=&T^\dag A+T^\dag B\\
		AT^\dag-T^\dag B&=&T^\dag A-BT^\dag. \label{eqn_3_1}
	\end{eqnarray}
	Again using given hypotheses, $(A-B)T=-T(A-B)$. By Theorem \ref{thm_3_2},
	\begin{eqnarray}\nonumber
		(A-B)T^\dag&=&-T^\dag(A-B)\\ \nonumber
		AT^\dag-BT^\dag&=&-T^\dag A+T^\dag B\\
		AT^\dag-T^\dag B&=&-T^\dag A+BT^\dag.\label{eqn_3_2}
	\end{eqnarray}
	
	Adding (\ref{eqn_3_1}) and (\ref{eqn_3_2}), we have  $AT^\dag=T^\dag B$. Similarly subtracting (\ref{eqn_3_2}) from (\ref{eqn_3_1}), we have $BT^\dag=T^\dag A$.
\end{proof}

\begin{theorem}
	Let $T, S$ be $EP$  operators on $ \mc H $ and $A, B\in \mc B(\mc H)$. If $AT=SB$ and $BT=SA$, then $AT^\dag=S^\dag B$ and $BT^\dag=S^\dag A$.
\end{theorem}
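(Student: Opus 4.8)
The plan is to follow the add-and-subtract strategy that proves Theorem~\ref{thm_3_3}, but with the two-operator Fuglede--Putnam type result (Theorem~\ref{thm_3_2}) doing the work in place of its one-operator predecessors. First I would add the two hypotheses $AT=SB$ and $BT=SA$ to get
\[
(A+B)T=S(A+B).
\]
Since $T$ and $S$ are $EP$ operators and $A+B\in\mc B(\mc H)$, Theorem~\ref{thm_3_2} applies verbatim (with $A+B$ playing the role of $A$) and yields
\[
(A+B)T^\dag=S^\dag(A+B).
\]

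Next I would subtract the hypotheses to obtain $(A-B)T=S(B-A)=-S(A-B)$, so that $C:=A-B$ satisfies $CT=(-S)C$. The point is that $-S$ is again an $EP$ operator, because $\mc R(-S)=\mc R(S)=\mc R(S^*)=\mc R((-S)^*)$, and that $(-S)^\dag=-S^\dag$. Hence Theorem~\ref{thm_3_2}, applied with the $EP$ operator $-S$ in place of $S$, gives $CT^\dag=(-S)^\dag C=-S^\dag C$, i.e.
\[
(A-B)T^\dag=-S^\dag(A-B).
\]

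Finally I would add and subtract the two displayed consequences. Adding them cancels the cross terms and leaves $2AT^\dag=2S^\dag B$, whence $AT^\dag=S^\dag B$; subtracting them instead gives $2BT^\dag=2S^\dag A$, i.e. $BT^\dag=S^\dag A$. I do not expect a genuine obstacle here: once Theorem~\ref{thm_3_2} is in hand the argument is purely algebraic. The only step demanding a little care is the sign bookkeeping in the difference calculation, namely recognizing that $-S$ inherits the $EP$ property and that $(-S)^\dag=-S^\dag$, so that the two-operator theorem can be invoked unchanged; this is precisely the device already used (with $S=-T$) in the proof of Theorem~\ref{thm_3_3}.
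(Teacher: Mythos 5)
Your proposal is correct and matches the paper's own proof essentially verbatim: the paper also adds the hypotheses to get $(A+B)T=S(A+B)$, subtracts to get $(A-B)T=-S(A-B)$, applies Theorem \ref{thm_3_2} to each, and then combines the resulting identities to isolate $AT^\dag=S^\dag B$ and $BT^\dag=S^\dag A$. Your explicit check that $-S$ is $EP$ with $(-S)^\dag=-S^\dag$ is a small justification the paper leaves implicit, but the route is the same.
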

\begin{proof}
	From given hypotheses, $(A+B)T=S(A+B)$. By Theorem \ref{thm_3_2},
	\begin{eqnarray}\nonumber
		(A+B)T^\dag&=&S^\dag(A+B)\\ \nonumber
		AT^\dag+BT^\dag&=&S^\dag A+S^\dag B\\
		AT^\dag-S^\dag B&=&S^\dag A-BT^\dag.\label{eqn_3_1_1}
	\end{eqnarray}
	Again using given hypotheses, $(A-B)T=-S(A-B)$. By Theorem \ref{thm_3_2},
	\begin{eqnarray}\nonumber
		(A-B)T^\dag&=&-S^\dag(A-B)\\ \nonumber
		AT^\dag-BT^\dag&=&-S^\dag A+S^\dag B\\
		AT^\dag-S^\dag B&=&-S^\dag A+BT^\dag.\label{eqn_3_2_1}
	\end{eqnarray}
	
	Adding (\ref{eqn_3_1_1}) and (\ref{eqn_3_2_1}), we have  $AT^\dag=S^\dag B$. Similarly subtracting (\ref{eqn_3_2_1}) from (\ref{eqn_3_1_1}), we have $BT^\dag=S^\dag A$.
	
\end{proof}

\section{Consequences of Fuglede-Putnam type theorems for $ EP $ operators}

\noindent The product of $EP$ operators is not an $EP$ operator in general.
\begin{example}
	Let  $ S,T\in \mc B(\ell_2) $ be defined by $ S(x_1,x_2,x_3,\ldots)=(x_1+x_2,x_1+x_2,x_3,\ldots)  $ and   $  T(x_1,x_2,x_3,\ldots)=(0,x_2,x_3,\ldots)  $. Here $S$ and $T$ are $EP$ operators, 
	but the product  $ST$ is not an $EP$ operator.
\end{example}

Djordjevi{\'c}  has given a necessary and sufficient condition for product of two $EP$ operators  to be an $EP$ operator again.
\begin{theorem}\cite{Dragan2001}
	Let $S,T$ be $EP$ operators on $ \mc H $. Then the following statements are equivalent:
	\begin{enumerate}
		\item $ST$ is an $EP$ operator ;
		\item $\mc R(ST)=\mc R(S)\cap \mc R(T)$ and $\mc N(ST)=\mc N(S)+\mc N(T)$.
	\end{enumerate}
\end{theorem}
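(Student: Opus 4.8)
The plan is to prove the two implications separately, and to exploit throughout that, since $S$ and $T$ are $EP$, the two clauses in statement $(2)$ are orthogonal-complement duals of one another. The workhorse identities are $\mc N(S)^\perp=\mc R(S^*)=\mc R(S)$ and $\mc N(T)^\perp=\mc R(T^*)=\mc R(T)$ (the $EP$ hypothesis, via the preliminaries), together with the elementary relation $(\mc N(S)+\mc N(T))^\perp=\mc N(S)^\perp\cap\mc N(T)^\perp=\mc R(S)\cap\mc R(T)$, valid for arbitrary subspaces. I will also use repeatedly that an $EP$ operator restricts to a bounded bijection of its own range, i.e.\ $S$ maps $\mc R(S)$ bijectively onto $\mc R(S)$ and $T$ maps $\mc R(T)$ bijectively onto $\mc R(T)$, with bounded inverses supplied by $S^\dag$ and $T^\dag$.

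For the clean half $(2)\Rightarrow(1)$, I would first observe that $\mc R(S)$ and $\mc R(T)$ are closed, hence $\mc R(S)\cap\mc R(T)$ is closed; the hypothesis $\mc R(ST)=\mc R(S)\cap\mc R(T)$ then already gives $ST\in\mc B_c(\mc H)$, so $(ST)^\dag$ exists. It remains to check $\mc R(ST)=\mc R((ST)^*)$. Using $\mc R((ST)^*)=\mc N(ST)^\perp$ together with the kernel hypothesis $\mc N(ST)=\mc N(S)+\mc N(T)$ and the displayed identity, one gets $\mc R((ST)^*)=(\mc N(S)+\mc N(T))^\perp=\mc R(S)\cap\mc R(T)=\mc R(ST)$, so $ST$ is $EP$.

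For $(1)\Rightarrow(2)$, assume $ST$ is $EP$, so $\mc R(ST)$ is closed and $\mc R(ST)=\mc R((ST)^*)$, $\mc N(ST)=\mc N((ST)^*)$, where $(ST)^*=T^*S^*$. The easy inclusions are immediate: $\mc R(ST)\subseteq\mc R(S)$ always, while $\mc R(ST)=\mc R(T^*S^*)\subseteq\mc R(T^*)=\mc R(T)$, giving $\mc R(ST)\subseteq\mc R(S)\cap\mc R(T)$; dually $\mc N(T)\subseteq\mc N(ST)$ always, while $\mc N(S)=\mc N(S^*)\subseteq\mc N(T^*S^*)=\mc N(ST)$, giving $\mc N(S)+\mc N(T)\subseteq\mc N(ST)$. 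Taking orthogonal complements shows the two reverse inclusions are equivalent, so the whole content reduces to $\mc N(ST)\subseteq\mc N(S)+\mc N(T)$. To attack this, set $W:=\mc N(T)\cap\mc R(S)$ and take $x\in\mc N(ST)$; writing $x=p+q$ with $p\in\mc R(S)$, $q\in\mc N(S)$ (an orthogonal splitting, since $S$ is $EP$), one has $S^*x=S^*p$, and from $\mc N(ST)=\mc N((ST)^*)$ applied to $x$ one obtains $S^*p\in\mc N(T)$; as $S^*p\in\mc R(S)$ as well, $S^*p\in W$. Moreover the same reasoning shows $S^*(W)\subseteq W$: if $v\in W$ then $v\in\mc N(T)\subseteq\mc N(ST)=\mc N(T^*S^*)$, whence $T^*S^*v=0$ and so $S^*v\in\mc N(T^*)=\mc N(T)$, while $S^*v\in\mc R(S)$. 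Since $S$ restricts to a bijection $S_0$ of $\mc R(S)$, if one knows the \emph{equality} $S^*(W)=W$ then $p=(S_0^*)^{-1}(S^*p)\in W\subseteq\mc N(T)$, giving $x=q+p\in\mc N(S)+\mc N(T)$ as desired (and simultaneously showing this sum is closed, as it then equals $\mc N(ST)$).

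The hard part, and the step that genuinely uses the $EP$-ness of the product rather than only of the factors, is thus the surjectivity $S^*(W)=W$; I have only shown $S^*(W)\subseteq W$. In finite dimensions this is automatic: $S^*$ restricted to the finite-dimensional $W$ is injective, hence onto, which completes the proof for $EP$ matrices. In the general Hilbert-space setting I would upgrade the inclusion to equality using the closed range of $ST$: applying the induced isomorphism $\mc H/\mc N(S)\cong\mc R(S)$ to $\mc R(ST)=S(\mc R(T))$ shows that $\mc R(T)+\mc N(S)$ is closed, and since this space is exactly $(\mc N(T)\cap\mc R(S))^\perp=W^\perp$, the subspace $W$ is well positioned relative to $\mc R(S)$; combined with the fact that $S_0^*$ is bounded below on $\mc R(S)$ and maps the closed subspace $W$ to a closed subspace of itself, the remaining task is to rule out a proper closed image, i.e.\ to show $S_0^*(W)$ is dense in $W$. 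Establishing this density (equivalently, the reverse invariance $(S_0^*)^{-1}(W)\subseteq W$) is the crux of the argument and the one point where the closed-range consequence above must be fed back in.
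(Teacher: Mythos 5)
The paper itself gives no proof of this statement---it is quoted from Djordjevi\'c's work as a known result---so there is nothing internal to compare against; I can only assess your argument on its own terms. Your direction $(2)\Rightarrow(1)$ is complete and correct: closedness of $\mc R(S)\cap\mc R(T)$ gives $ST\in\mc B_c(\mc H)$, and then $\mc R((ST)^*)=\mc N(ST)^\perp=(\mc N(S)+\mc N(T))^\perp=\mc R(S)\cap\mc R(T)=\mc R(ST)$. The easy inclusions in $(1)\Rightarrow(2)$ and the reduction of everything to $\mc N(ST)\subseteq\mc N(S)+\mc N(T)$ are also fine.

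The gap is exactly where you flag it, and it is not a removable technicality: the missing step is equivalent to the statement you are proving, so the argument is circular as it stands. You need $S^*(W)=W$ for $W=\mc N(T)\cap\mc R(S)$, having established only $S^*(W)\subseteq W$. Unwind what the reverse inclusion requires: for $u\in\mc R(S)$ one has $S^*u\in W$ if and only if $T^*S^*u=0$, i.e.\ if and only if $u\in\mc N(ST)$; hence $(S_0^*)^{-1}(W)=\mc N(ST)\cap\mc R(S)$, and the ``reverse invariance'' you need is precisely the assertion that every element of $\mc N(ST)\cap\mc R(S)$ lies in $\mc N(T)$. But that is the case $q=0$ of the very claim ($p\in\mc N(T)$ for $x=p+q\in\mc N(ST)$) that the lemma was introduced to prove, and one checks the two are in fact equivalent. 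So the ``density'' step you defer is not an auxiliary fact to be supplied later; it is the theorem. The closedness of $\mc R(T)+\mc N(S)$ that you correctly extract from the closed range of $ST$ is true but is not shown to exclude a proper closed image of $W$ under $S_0^*$ (an injective, bounded-below map of a closed subspace into itself can well have proper closed range---the unilateral shift---so some genuinely new input is required to rule this out). What you do have is a complete proof in the finite-dimensional (EP-matrix) case, where injectivity of $S_0^*|_W$ forces surjectivity. For general Hilbert spaces the standard route, and Djordjevi\'c's, is to represent $S$ and $T$ as $2\times2$ operator matrices relative to $\mc H=\mc R(S)\oplus\mc N(S)$ and $\mc H=\mc R(T)\oplus\mc N(T)$ and verify both clauses of $(2)$ by direct computation with $TT^\dag=T^\dag T$ and $SS^\dag=S^\dag S$, rather than arguing through the invariant subspace $W$.
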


The following example illustrates the fact that there are operators $S$ and $T$ in $\mc B_c(\mc H)$ such that $ST\in \mc B_c(\mc H)$ but $TS\notin \mc B_c(\mc H)$. We have proved that when $S$ and $T$ are $EP$ operators, the closed rangeness of $ST$ implies the closed rangeness of $TS$ and vice-versa.
\begin{example}\cite{Sam2006}
	Let $S$ be an operator on $\ell_2$ defined by $S(x_1,x_2,x_3,\ldots)=(x_1,0,x_2,0,\ldots)$ and $T$ be another operator on $\ell_2$ defined by $T(x_1,x_2,x_3,\ldots)=(\frac{x_1}{1}+x_2,\frac{x_3}{3}+x_4,\frac{x_5}{5}+x_6,\ldots)$. One can verify that both $S$ and $T$ are bounded operators and are having closed ranges. Also, $\mc R(ST)$ is closed but $\mc R(TS)$ is not closed.
\end{example}

\begin{theorem}\cite{Sam2018}
	Let $S$ and $T$ be $EP$ operators on $\mc H$. Then $\mc R(ST)$ is closed if and only if $\mc R(TS)$ is closed.
\end{theorem}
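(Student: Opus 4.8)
The plan is to recast each closed-range condition as the closedness of a sum of two closed subspaces and then to pass between the two sums using the orthogonality built into the $EP$ hypothesis. The first step rests on the identity $\mc R(ST)=S\big(\mc R(T)\big)$ and on the following image lemma: if $S\in\mc B_c(\mc H)$ and $\mc M$ is a closed subspace of $\mc H$, then $S(\mc M)$ is closed if and only if $\mc M+\mc N(S)$ is closed. Granting the lemma, $\mc R(ST)$ is closed if and only if $\mc R(T)+\mc N(S)$ is closed, and symmetrically $\mc R(TS)=T\big(\mc R(S)\big)$ is closed if and only if $\mc R(S)+\mc N(T)$ is closed.

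Next I would exploit that $S$ and $T$ are $EP$. Combining the defining equality $\mc R(T)=\mc R(T^*)$ with the identity $\mc N(T)^\perp=\mc R(T^*)$ recorded in the preliminaries gives $\mc R(T)=\mc N(T)^\perp$, i.e. $\mc N(T)=\mc R(T)^\perp$, and likewise $\mc N(S)=\mc R(S)^\perp$. Hence $\mc R(T)+\mc N(S)=\mc N(T)^\perp+\mc R(S)^\perp$. The second ingredient is the classical duality for sums of closed subspaces: for closed subspaces $\mc U,\mc V$ of $\mc H$, the sum $\mc U+\mc V$ is closed if and only if $\mc U^\perp+\mc V^\perp$ is closed. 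Taking $\mc U=\mc N(T)$ and $\mc V=\mc R(S)$ shows that $\mc N(T)^\perp+\mc R(S)^\perp$ is closed if and only if $\mc N(T)+\mc R(S)$ is closed. Chaining the equivalences gives
\begin{align*}
\mc R(ST)\ \text{closed} &\iff \mc R(T)+\mc N(S)\ \text{closed} \iff \mc R(S)+\mc N(T)\ \text{closed}\\
&\iff \mc R(TS)\ \text{closed},
\end{align*}
which is the desired equivalence.

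The part that needs genuine work, as opposed to bookkeeping, is the image lemma; everything else is orthogonality and the cited duality. I would prove it by factoring $S$ through its injective part: the restriction $S|_{\mc N(S)^\perp}$ is injective with range $\mc R(S)$, and since $\mc R(S)$ is closed this restriction is bounded below, hence a topological isomorphism onto $\mc R(S)$. Writing $P$ for the orthogonal projection of $\mc H$ onto $\mc N(S)^\perp$, one checks $S(\mc M)=S(P\mc M)$ (because $(I-P)\mc M\subseteq\mc N(S)$), so the isomorphism reduces the closedness of $S(\mc M)$ to that of $P\mc M$. For the last link I would use the identity $P^{-1}(P\mc M)=\mc M+\mc N(S)$: if $P\mc M$ is closed, then its preimage $\mc M+\mc N(S)$ under the continuous map $P$ is closed; conversely, if $\mc M+\mc N(S)$ is closed, then $P$ maps this subspace into itself and restricts to a bounded idempotent on the Hilbert space $\mc M+\mc N(S)$ with range $P\mc M$, so $P\mc M$ is closed since a bounded idempotent has closed range. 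The closedness of $\mc N(S)$, available since $S\in\mc B_c(\mc H)$, is what makes both directions go through; the subspace duality invoked above can either be cited as a standard fact or derived from the same projection bookkeeping applied to the projection onto $\mc U$.
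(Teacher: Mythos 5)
Your argument is correct, but there is nothing in the paper to compare it against: this theorem is imported from \cite{Sam2018} and stated without proof, so your write-up is effectively supplying a proof the present paper omits. As a standalone argument it works. Its two pillars are (i) the image lemma, which is exactly the Bouldin--Izumino criterion for closedness of a product (for $S\in\mc B_c(\mc H)$ and closed $\mc M$, $S(\mc M)$ is closed iff $\mc M+\mc N(S)$ is closed), and (ii) the classical duality that for closed subspaces $\mc U,\mc V$ the sum $\mc U+\mc V$ is closed iff $\mc U^\perp+\mc V^\perp$ is closed. Your proof of (i) is sound: $S\vert_{\mc N(S)^\perp}$ is bounded below precisely because $\mc R(S)$ is closed, the identity $P^{-1}(P\mc M)=\mc M+\mc N(S)$ gives one direction by continuity of $P$, and the other direction follows since $P$ restricts to a bounded idempotent on the Hilbert space $\mc M+\mc N(S)$ whose range $P\mc M$ is the kernel of the complementary idempotent, hence closed. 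The $EP$ hypothesis enters exactly where it must: it is equivalent to $\mc N(T)=\mc R(T)^\perp$ (item (3) of the characterization in Section 2), which converts $\mc R(T)+\mc N(S)$ into $\mc N(T)^\perp+\mc R(S)^\perp$ so that the duality swaps it for $\mc N(T)+\mc R(S)$, and the chain closes. If you want the proof fully self-contained you should either cite the sum-of-subspaces duality (it is in Kato, or Deutsch's survey on angles between subspaces) or note, as you suggest, that it follows from the same idempotent bookkeeping; everything else is complete.
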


\begin{example}
	Consider the $EP$ operators $ S,T\in \mc B(\ell_2) $ defined by $$S(x_1,x_2,x_3,\ldots)=(x_1+x_2,x_2,x_3,\ldots)  $$ and $T(x_1,x_2,x_3,\ldots)=(x_1,0,x_3,\ldots)  $. Here $ST$  is an $ EP $ operator, but $ TS $ is not $ EP $. 
\end{example}

\begin{theorem}\label{them_3_5}
	Let $S, T\in \mc B(\mc H)$ such that $(ST)^\dag=T^\dag S^\dag$.  Then $ST$ and $TS$ are $EP$ if and only if $S^\dag ST=TSS^\dag$ and  $STT^\dag=T^\dag TS$.
\end{theorem}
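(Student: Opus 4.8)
The plan is to prove the two implications separately, using throughout that a closed-range operator $A$ is $EP$ exactly when $AA^\dag=A^\dag A$, equivalently when $\mc R(A)=\mc R(A^*)$, equivalently when $\mc N(A)=\mc N(A^*)$. The first thing I would record is a geometric reading of the two hypotheses. Since $STT^\dag=S(TT^\dag)$ and $T^\dag TS=(T^\dag T)S$ are $S$ pre- and post-composed with the orthogonal projections onto $\mc R(T)$ and $\mc R(T^*)$, the relation $STT^\dag=T^\dag TS$ is equivalent to the pair of inclusions $S\,\mc R(T)\subseteq\mc R(T^*)$ and $S\,\mc N(T^*)\subseteq\mc N(T)$; symmetrically $S^\dag ST=TSS^\dag$ is equivalent to $T\,\mc R(S)\subseteq\mc R(S^*)$ and $T\,\mc N(S^*)\subseteq\mc N(S)$. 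This reformulation is what makes the forward implication transparent.

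For the forward implication assume $ST$ and $TS$ are $EP$. To obtain $STT^\dag=T^\dag TS$ I check its two geometric halves. If $y=Tz\in\mc R(T)$, then $Sy=(ST)z\in\mc R(ST)=\mc R((ST)^*)=\mc R(T^*S^*)\subseteq\mc R(T^*)$, where the middle equality is that $ST$ is $EP$; this gives $S\,\mc R(T)\subseteq\mc R(T^*)$. If $T^*x=0$, then $S^*T^*x=0$, so $x\in\mc N(S^*T^*)=\mc N((TS)^*)=\mc N(TS)$, the last equality being that $TS$ is $EP$; hence $TSx=0$, i.e. $Sx\in\mc N(T)$, which gives $S\,\mc N(T^*)\subseteq\mc N(T)$. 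Interchanging the roles of $S$ and $T$ (and of $ST$ and $TS$) yields the two halves of $S^\dag ST=TSS^\dag$ verbatim. Notice that this direction never uses the reverse-order hypothesis.

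For the backward implication assume the two relations hold. That $ST$ is $EP$ is immediate: using $(ST)^\dag=T^\dag S^\dag$,
\[
ST(ST)^\dag=STT^\dag S^\dag=T^\dag TSS^\dag=T^\dag S^\dag ST=(ST)^\dag ST,
\]
where the second equality is $STT^\dag=T^\dag TS$ multiplied on the right by $S^\dag$ and the third is $S^\dag ST=TSS^\dag$ multiplied on the left by $T^\dag$. The real work is the operator $TS$: I would first establish the companion law $(TS)^\dag=S^\dag T^\dag$ and then run the mirror computation $TS(S^\dag T^\dag)=S^\dag STT^\dag=S^\dag T^\dag TS=(S^\dag T^\dag)TS$, once more using the two relations, to conclude that $TS$ is $EP$.

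Proving $(TS)^\dag=S^\dag T^\dag$ is the main obstacle, and I would verify the four Moore--Penrose equations for $X=S^\dag T^\dag$ against $TS$. Two observations do the job. From $S^\dag ST=TSS^\dag$ one gets $\mc R(TS)\subseteq\mc R(S^*)$, hence $S^\dag S\,(TS)=TS$, which yields the equations $TS\,X\,TS=TS$ and $X\,TS\,X=X$. For the two self-adjointness requirements one computes $TSX=XTS=S^\dag S\,TT^\dag$, so everything reduces to showing that the projections $S^\dag S$ and $TT^\dag$ commute; this is exactly where $STT^\dag=T^\dag TS$ enters, because its adjoint $TT^\dag S^*=S^*T^\dag T$ shows $TT^\dag\,\mc R(S^*)\subseteq\mc R(S^*)$, and an orthogonal projection that leaves the range of another orthogonal projection invariant commutes with it (since $PQP=QP$ forces $PQ=QP$ after taking adjoints). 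The delicate technical payoff is that, $TS$ not being assumed to have closed range, it is precisely the verification of these four identities that simultaneously produces the closed range of $TS$ and identifies $X$ as $(TS)^\dag$; after that the mirror computation above finishes the proof.
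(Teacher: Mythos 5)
Your argument is correct in substance but takes a genuinely different route from the paper's, and in two places it is more careful than the published proof. In the forward direction the paper applies its Theorem \ref{thm_3_2} with $A=S^\dag$ via the chain $S^\dag(ST)^\dag=S^\dag T^\dag S^\dag=(TS)^\dag S^\dag$, which silently invokes the second reverse-order law $(TS)^\dag=S^\dag T^\dag$, not among the hypotheses; your geometric reformulation of $STT^\dag=T^\dag TS$ and $S^\dag ST=TSS^\dag$ as range/kernel inclusions avoids this entirely and, as you observe, does not even need $(ST)^\dag=T^\dag S^\dag$. In the backward direction your computation for $ST$ coincides with the paper's; for $TS$ the paper passes from $TSS^\dag T^\dag=S^\dag T^\dag TS$ straight to ``$TS(TS)^\dag=(TS)^\dag TS$'', again implicitly assuming $(TS)^\dag=S^\dag T^\dag$, and you correctly identify this as the crux and supply most of a proof of it.

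The one step you must shore up is the claim that $S^\dag S(TS)=TS$ ``yields'' $X\,TS\,X=X$ for $X=S^\dag T^\dag$. It does give $TS\,X\,TS=TS$, but for the second Penrose equation your computation (after commuting $S^\dag S$ with $TT^\dag$) only reaches $X\,TS\,X=S^\dag T^\dag S^\dag S$, and the missing identity $S^\dag T^\dag S^\dag S=S^\dag T^\dag$ amounts to $T^\dag\mc N(S)\subseteq\mc N(S^*)$, which does not follow from $S^\dag S(TS)=TS$ alone. It is nonetheless true under your hypotheses: for $x\in\mc N(S)$ put $v=T^\dag x\in\mc R(T^*)$ and write $v=v_1+v_2$ with $v_1\in\mc R(S)$, $v_2\in\mc N(S^*)$; since $Tv\in\mc N(S)=\mc R(S^*)^\perp$ while $Tv_1\in\mc R(S^*)$ and $Tv_2\in\mc N(S)$, one gets $Tv_1=0$, and then $v\perp\mc N(T)$ together with $v_1\perp v_2$ forces $v_1=0$, so $v\in\mc N(S^*)$. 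Alternatively you can bypass the second Penrose equation altogether: the identities you have already established show that $E=TSX=XTS=S^\dag S\,TT^\dag$ is an orthogonal projection satisfying $E(TS)=(TS)E=TS$, whence $\mc R(TS)=\mc R(E)$ is closed and $\mc N(TS)=\mc N(E)=\mc R(E)^\perp=\mc R(TS)^\perp=\mc N((TS)^*)$, so $TS$ is $EP$ without ever identifying $(TS)^\dag$ exactly. With either repair your proof is complete.
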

\begin{proof}
	Suppose $ST$ and $TS$ are $EP$. Then $(ST)^\dag$ and $(TS)^\dag$ are also $EP$. Hence we have $S^\dag (ST)^\dag=S^\dag T^\dag S^\dag=(TS)^\dag S^\dag$. Therefore by Theorem \ref{thm_3_2}, we have $S^\dag ST=TSS^\dag$. In a similar way we have $(ST)^\dag T^\dag=T^\dag S^\dag T^\dag=T^\dag (TS)^\dag$. Now we use Theorem \ref{thm_3_2}, we get $STT^\dag=T^\dag TS$. 
	Conversely, suppose  we have
	\begin{eqnarray}
		S^\dag ST&=&TSS^\dag\label{eqn_4_1}\\
		STT^\dag&=&T^\dag TS \label{eqn_4_2}.
	\end{eqnarray}
	From the equation (\ref{eqn_4_1}), we get $T^\dag S^\dag ST=T^\dag T SS^\dag$ and from the equation (\ref{eqn_4_2}), we get $STT^\dag S^\dag=T^\dag TSS^\dag$. Since the right side of these two equations are same, we have $T^\dag S^\dag ST= STT^\dag S^\dag$. Hence $(ST)^\dag S T=ST(ST)^\dag$.
	Therefore $ST$ is $EP$. Similarly from the equation (\ref{eqn_4_1}), we get $S^\dag STT^\dag=TSS^\dag T^\dag$ and from the equation (\ref{eqn_4_2}), we get $S^\dag STT^\dag=S^\dag T^\dag TS$. Therefore $TSS ^\dag T^\dag=S^\dag T^\dag TS$. Hence $TS(TS)^\dag=(TS)^\dag TS$. Thus $TS$ is $EP$.
	%  Conversely suppose  we have $A^\dag AB=BAA^\dag$ and  $ABB^\dag=B^\dag BA$. From the former equation, we get $B^\dag A^\dag AB=B^\dag B AA^\dag$ and from the later equation we get $ABB^\dag A^\dag=B^\dag BA A^\dag$. Since the right side of the last two equtions are same, we have $B^\dag A^\dag AB= ABB^\dag A^\dag$. Hence $(AB)^\dag A B=AB(AB)^\dag$. Therefore $AB$ is $EP$. Similarly we have $A^\dag ABB^\dag=BAA^\dag B^\dag$ and $A^\dag ABB^\dag=A^\dag B^\dag BA$. Therefore $BAA ^\dag B^\dag$. Hence $BA(BA)^\dag=(BA)^\dag BA$. Thus $BA$ is $EP$
\end{proof}

\begin{corollary}\label{thm_3_4}
	Let $S=UP\in \mathbb{C}^{n\times n}$ be a polar decomposition of $S$ where $U\in \mathbb{C}^{n\times n}$ is unitary and $P\in \mathbb{C}^{n\times n}$ is positive semidefinite Hermitian and let $T \in \mathbb{C}^{n\times n}$ with $(ST)^\dag=T^\dag S^\dag$. If $TU$ is $EP$ and $PTU=TUP$, then $ST$ and $TS$ are $EP$.
\end{corollary}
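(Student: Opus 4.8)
The plan is to reduce everything to Theorem \ref{them_3_5}. Since the hypothesis $(ST)^\dag = T^\dag S^\dag$ is already assumed, that theorem tells us $ST$ and $TS$ are $EP$ as soon as we verify the two commutation identities
\[
S^\dag S\, T = T\, S S^\dag \qquad\text{and}\qquad S\, T T^\dag = T^\dag T\, S .
\]
So the whole task is to establish these two identities from the data ``$TU$ is $EP$'' and ``$PTU = TUP$''.

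First I would record the Moore--Penrose bookkeeping coming from the polar decomposition. Using unitarity of $U$ and the fact that $P$ is Hermitian positive semidefinite, a direct check of the four Penrose equations gives $S^\dag = P^\dag U^*$, whence $S^\dag S = P^\dag P$ and $S S^\dag = U P^\dag P\, U^*$. Writing $Q := P^\dag P = P P^\dag$ for the (Hermitian, hence $EP$) orthogonal projection onto $\mc R(P)$, these become $S^\dag S = Q$ and $S S^\dag = U Q U^*$. Next I would set $R := TU$, so that $T = R U^*$, and check in the same way that $T^\dag = U R^\dag$, giving $T T^\dag = R R^\dag$ and $T^\dag T = U R^\dag R\, U^*$. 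All quantities entering the two target identities are then expressed through $U$, $P$, $Q$ and the $EP$ operator $R$.

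The crucial step is to squeeze commutation relations out of the hypotheses $R = TU$ is $EP$ and $PR = RP$. Treating $P$ as an $EP$ (Hermitian) operator commuting with $R$, Theorem \ref{thm_3_1} applied with the $EP$ operator taken to be $P$ and bounded operator $R$ yields $R P^\dag = P^\dag R$; applied with the $EP$ operator taken to be $R$ and bounded operator $P$ it yields $P R^\dag = R^\dag P$. Combining these gives $Q R = R Q$ and $P R R^\dag = R R^\dag P$. Then the two identities follow by substitution: for the first, $S^\dag S\, T = Q R U^* = R Q U^*$ while $T S S^\dag = R U^*(U Q U^*) = R Q U^*$, so they coincide; for the second, $S\, T T^\dag = U P R R^\dag = U R R^\dag P$ while $T^\dag T\, S = U R^\dag R\, U^* U P = U R^\dag R\, P$, and these agree precisely because $R$ is $EP$, so $R R^\dag = R^\dag R$. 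An application of Theorem \ref{them_3_5} then finishes the proof.

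I expect the main obstacle to be the correct identification of the two Moore--Penrose inverses, namely verifying $S^\dag = P^\dag U^*$ and $T^\dag = U R^\dag$ through the Penrose equations (this is where unitarity of $U$ and the Hermitian-$EP$ structure of $P$ are genuinely used), together with the careful application of the $EP$ Fuglede theorem (Theorem \ref{thm_3_1}) to transfer commutativity of $P$ with $R$ onto $P^\dag$ and $R^\dag$. Once those commutation facts are secured, the verification of the two hypotheses of Theorem \ref{them_3_5} is purely algebraic, relying only on $U^*U = I$ and the $EP$ relation $R R^\dag = R^\dag R$.
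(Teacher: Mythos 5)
Your proposal is correct and follows essentially the same route as the paper: both verify the two hypotheses $S^\dag ST = TSS^\dag$ and $STT^\dag = T^\dag TS$ of Theorem \ref{them_3_5} by computing $S^\dag = P^\dag U^*$ and $(TU)^\dag = U^*T^\dag$ from the polar decomposition, invoking Theorem \ref{thm_3_1} to commute $P^\dag$ past $TU$ and $(TU)^\dag$ past $P$, and using that $TU$ is $EP$. Your bookkeeping with $Q = P^\dag P$ and $R = TU$ merely repackages the paper's chain of equalities and, if anything, makes explicit the appeal to Theorem \ref{thm_3_1} that the paper leaves implicit in its first computation.
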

\begin{proof}
	Suppose $TU$ is $EP$ and $PTU=TUP$, then $TSS^\dag=T(UP)(UP)^\dag=TUPP^\dag U^*=PTUP^\dag U^*=PP^\dag TUU^*=PP^\dag T=P^\dag PT=P^\dag U^*UPT=(UP)^\dag UP T=S^\dag ST$. Since $TU$ is $EP$ and $PTU=TUP$, we have $P(TU)^\dag=(TU)^\dag P$. Therefore $STT^\dag=UPTUU^*T^\dag=UPTU(TU)^\dag=UTUP(TU)^\dag=UTU(TU)^\dag P=U(TU)^\dag TU P=$  $UU^*T^\dag TUP=T^\dag TS$. Thus by Theorem \ref{them_3_5}, $ ST $ and $ TS $ are $ EP $.
\end{proof}

\bibliographystyle{amsplain}
\bibliography{hypo_EP.bib}

\end{document}